\documentclass[11pt,a4paper]{article}
\usepackage[T1]{fontenc}
\usepackage[utf8]{inputenc}
\usepackage{lmodern}
\usepackage[margin=27mm]{geometry}
\usepackage{amsmath,amssymb,amsthm}
\usepackage{booktabs}
\usepackage{microtype}
\usepackage[hidelinks]{hyperref}
\hypersetup{pdftitle={A proof of the irreducibility conjecture for Legendre polynomials},pdfauthor={Zikang Deng}}
\newtheorem{theorem}{Theorem}[section]
\newtheorem{lemma}[theorem]{Lemma}
\newtheorem{proposition}[theorem]{Proposition}
\numberwithin{equation}{section}
\allowdisplaybreaks[1]
\begin{document}
\title{A proof of the irreducibility conjecture for Legendre polynomials}
\author{Zikang Deng\thanks{Correspondence: \href{mailto:202521130053@mail.bnu.edu.cn}{\texttt{202521130053@mail.bnu.edu.cn}}.}\\Beijing Normal University, Beijing, China}
\date{}
\maketitle

\begin{abstract}
We prove the irreducibility conjecture for Legendre polynomials proposed by Stieltjes: for every integer $j\geq 1$, both $P_{2j}(x)$ and $P_{2j+1}(x)/x$ are irreducible over the rational numbers. The proof proceeds by contradiction. Starting from a hypothetical nontrivial factorization, we remove the zero root from $P_n$ when present, multiply by a suitable constant, and express the resulting polynomial as a product of two integral polynomials $A(x^2)$ and $B(x^2)$. We then construct the resultant and its odd part,
\[
R=\operatorname{Res}_t(A,B),\qquad
\mathcal{R}=\frac{|R|}{2^{v_2(R)}}.
\]
Put $k=\deg A\leq\deg B$ and $m=\lfloor n/2\rfloor$. Using the classical auxiliary polynomial $U_n$ and the differential identity
\[
(1-x^2)(P_n'U_n-P_nU_n')=1-P_n^2,
\]
together with orthogonality, divisibility properties of the coefficients, and a least-common-multiple estimate, we obtain an upper bound for $\mathcal{R}$. On the other hand, the Legendre differential equation gives a lower bound for the absolute value of the derivative of $AB$ at each root of $A$. We use Chebyshev polynomials and Hadamard's inequality to bound the product of the squared pairwise differences of these roots, and combine this with estimates for the leading coefficients and the power of $2$ in $R$ to obtain a lower bound for $\mathcal{R}$. These estimates yield
\[
\begin{gathered}
m\log4-\log\frac{4(m+1)^2}{\sqrt m}
<\frac{\log\mathcal R}{k}
<\bigl(m+\sqrt{2n-1}\bigr)\log3
\qquad(n\ge64).
\end{gathered}
\]
For every $n\ge256$, the lower bound strictly exceeds the upper bound, giving a contradiction. Combining this with established irreducibility results and explicit integer comparisons for the remaining degrees proves irreducibility in every degree.
\end{abstract}

\noindent\textbf{Keywords:} Legendre polynomials; irreducibility; resultants; Newton polygons; $p$-adic valuations.

\noindent\textbf{Mathematics Subject Classification (2020):} Primary 11R09; Secondary 33C45.

\section{Introduction}

With the normalization $P_n(1)=1$, the Legendre polynomials are defined by Rodrigues' formula
\begin{equation}
P_n(x)=\frac{1}{2^n n!}\frac{d^n}{dx^n}(x^2-1)^n,
\qquad n\geq0.
\label{eq:1.1}
\end{equation}
They satisfy $P_n(-x)=(-1)^nP_n(x)$, so that $x$ is necessarily a factor when $n$ is odd. In his letter to Hermite dated October 2, 1890, Stieltjes asked whether Legendre polynomials are always irreducible over the rational numbers after this inevitable factor is removed \cite{stieltjes,cullinan}. We prove the following result.

\begin{theorem}[Stieltjes' irreducibility conjecture]
For every integer $j\geq1$, the polynomials $P_{2j}(x)$ and $P_{2j+1}(x)/x$ are irreducible in $\mathbb{Q}[x]$.
\end{theorem}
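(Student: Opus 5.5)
We argue by contradiction, following the strategy announced in the abstract. Fix $n\ge 2$ and put $m=\lfloor n/2\rfloor$. Write $P_n(x)=x^{\epsilon}Q(x^2)$ with $\epsilon\in\{0,1\}$ and $\deg Q=m$. Any factor of $P_n$ (or of $P_n/x$) over $\mathbb{Q}$ has roots closed under $x\mapsto -x$ up to pairing. Hence a nontrivial factorization of $P_n(x)/x^\epsilon$ yields one of the form $c\,Q(t)=A(t)B(t)$ with $t=x^2$, $A,B\in\mathbb{Z}[t]$ primitive and $1\le k=\deg A\le \deg B=m-k$. There is one subtlety: a factor $F(x)$ that is not even. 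Then $F(x)F(-x)$ divides $Q(x^2)$, and $F(x)F(-x)$ is itself even. We handle this by first showing $Q$ is irreducible in $t$, and then separately excluding a factorization $Q(x^2)=F(x)F(-x)$. The latter would force $\pm Q(0)$ to be a square up to the leading coefficient, and this can be refuted by a $2$-adic or Newton polygon check at a prime $p\in(n/2,n]$ dividing the coefficients appropriately. So the main task is to rule out $cQ=AB$ in $\mathbb{Z}[t]$.

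Set $R=\operatorname{Res}_t(A,B)$, a nonzero integer since $Q$ has simple roots, and let $\mathcal{R}$ be its odd part. For the \textbf{upper bound}, use the second-kind function $U_n$ (degree $n-1$, rational coefficients with controlled denominators dividing a power of $2$ times $\operatorname{lcm}(1,\dots,n)$) and the identity $(1-x^2)(P_n'U_n-P_nU_n')=1-P_n^2$. This identity shows that $P_n'U_n\equiv 1/(1-x^2)$ modulo $P_n$. Evaluating at roots of $A$ then expresses $\prod_{A(\alpha)=0}B(\alpha)$ through quantities whose odd denominators are bounded by $\operatorname{lcm}(1,\dots,2n)$-type factors, using $P_n'=(AB)'=A'B+AB'$ at roots of $A$. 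Hence $\mathcal{R}$ divides an explicit integer. Orthogonality is used to show that certain coefficients vanish or are divisible by small primes. The prime-number estimate $\log\operatorname{lcm}(1,\dots,N)\le N\log 3$ (Hanson-type), applied with the $\sqrt{2n-1}$ correction coming from primes $p\le\sqrt{2n}$ appearing to higher powers, gives $\log\mathcal{R}/k<(m+\sqrt{2n-1})\log 3$.

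For the \textbf{lower bound}, write $|R|=a^{m-k}\prod_{A(\alpha)=0}|B(\alpha)|$ with $a$ the leading coefficient of $A$, and $|B(\alpha)|=|(AB)'(\alpha)|/|A'(\alpha)|$. The Legendre equation $(1-x^2)P_n''-2xP_n'+n(n+1)P_n=0$ bounds $|P_n'|$ from below at zeros via the Sonin-type monotone envelope $P_n^2+(1-x^2)P_n'^2/(n(n+1))$, which controls $|(AB)'(\alpha)|$. Next, $\prod_\alpha|A'(\alpha)|=a^{-?}\sqrt{|\mathrm{disc}A|}$, and the discriminant is bounded above using Hadamard's inequality on a Vandermonde matrix written in the Chebyshev basis. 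Since the roots lie in $[0,1]$, Chebyshev polynomials give the optimal column normalization, and this is where the $4^{m}$ growth arises. Finally, the leading coefficient of $cQ$ is $\binom{2n}{n}/2^n$ times $c$, and $v_2(R)$ is bounded via the $2$-adic Newton polygon of $Q$. Combining these gives $\log\mathcal{R}/k>m\log 4-\log\bigl(4(m+1)^2/\sqrt m\bigr)$ for $n\ge64$.

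Comparing the two bounds, since $\log4>\log3$, the lower bound wins for $n\ge256$, which is the contradiction. For $n<256$, we invoke known results: irreducibility has been verified computationally and by Holt/Wahab-type Newton polygon arguments in small degrees. Where needed, we redo the explicit integer comparison of the actual $\mathcal{R}$-bounds for each $k$. The main obstacle is the upper bound. The delicate part is tracking the exact odd denominators of $U_n$, and showing that only $\operatorname{lcm}$-size, rather than factorial-size, quantities enter $\mathcal{R}$. I would first try to clear denominators via the explicit formula $U_n=\sum_{j}\frac{2n-4j-1}{(2j+1)(n-j)}P_{n-2j-1}$ together with orthogonality.
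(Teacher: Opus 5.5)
You have the paper's overall architecture: pass to $t=x^2$, estimate the odd part of $\operatorname{Res}_t(A,B)$, use the Wronskian identity for $U_n$ for the upper bound, the Sonin-type function and Chebyshev--Hadamard for the lower bound, and the $2$-adic Newton polygon for $v_2(R)$. But the upper bound, as you describe it, cannot reach $(m+\sqrt{2n-1})\log3$. Evaluating $(1-\alpha^2)P_n'(\alpha)U_n(\alpha)=1$ at the roots controls $v_p(R)$ only through denominators of size up to $2n-1$. These are the denominators of $U_n$, and the denominators $n+j+1$ that arise for roots of negative valuation. So at best you get $v_p(R)\le k\lfloor\log_p(2n-1)\rfloor$ for every odd $p$, i.e.\ $\log\mathcal R/k\lesssim\log\operatorname{lcm}(1,\dots,2n-1)\approx 4m$. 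The primes in $(m,2n-1]$ alone contribute about $\vartheta(4m)-\vartheta(m)\approx3m$, far above the lower bound $m\log4\approx1.39m$. Your $\sqrt{2n-1}$ correction accounts for higher prime powers, not for these primes.

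The missing idea is a separate proof that no odd prime $p>m$ divides $R$ at all. The Wronskian identity cannot supply it: for $p\in(m,n]$, the summand $P_{p-1}P_{n-p}/p$ of $U_n$ only yields $v_p(P_n'(\alpha))\le1$. The paper (Theorem~3.5) argues modulo $p$ instead.
\begin{itemize}
\item Since $n<2p$, write $n=\lambda p+r$. The coefficient formula and the recurrence give $\overline P_n=x^{\lambda p}\,\overline P_r$ in $\mathbb F_p[x]$.
\item Because $r<p$, the reduced Legendre equation shows $\overline P_r$ is separable with $\pm1$ not roots. Hence $P_n'(\alpha)$ is a $p$-adic unit at every root of valuation $0$.
\item Roots of nonzero valuation are controlled by $v_p(\operatorname{lc}M_n)\le1$ and $v_p(M_n(0))\le1$. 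These forbid $F$ and $G$ from both having roots of positive valuation, or both having roots of negative valuation.
\item Therefore every cross pair has $\delta_p(\alpha,\gamma)=0$.
\end{itemize}
Only with this does $\mathcal R^{1/k}$ divide $\operatorname{lcm}(1,\dots,m)\operatorname{lcm}(1,\dots,\lfloor\sqrt{2n-1}\rfloor)$. The explicit integer comparisons for the leftover small degrees rest on the same fact.

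Three smaller points.

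\textbf{Roots of negative valuation.} For a root $\alpha$ with $v_p(\alpha)<0$, evaluating the identity directly gives only $v_p\bigl(\widehat P_n'(\alpha^{-1})\bigr)\le(2n-1)v_p(\alpha^{-1})+O(\log_p n)$, which is useless. You need three ingredients: the reciprocal variable $\beta=\alpha^{-1}$; the $p$-integral polynomial $q=P_n(t)/(1-\beta t)$; and orthogonality in the form $I(q)=\beta^nI(t^nq)$. Together these give $v_p(\widehat U_n(\beta))\ge2n\,v_p(\beta)-\lfloor\log_p(2n-1)\rfloor$. That is the actual role of orthogonality in the argument.

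\textbf{The discriminant step.} In the lower bound, $\prod_i|A'(t_i)|=a^k\prod_{i<j}(t_i-t_j)^2$, not a multiple of $\sqrt{|\operatorname{disc}A|}$. With a square root you would only reach about $(2m-k)\log2$, which falls below $m\log3$ when $k$ is near $m/2$.

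\textbf{Non-even factors.} Your separate treatment of non-even factors is unnecessary. Wahab's Newton polygon of $P_n(2X+1)$, which you need anyway for $v_2(R)$, splits the nonzero roots into $\mathbb Q_2$-irreducible groups stable under $x\mapsto-x$. So every rational factor is even. Moreover, $A$ and $B$ use disjoint binary place values of $m$, and this is exactly what gives $A(1)=2^{2k-s_2(k)}<a$ and $v_2(R)\le e(2k-s_2(k))$.
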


Wahab \cite{wahab} used Newton polygons to prove irreducibility for certain degrees; Cullinan and Hajir \cite{cullinan} further studied related irreducibility criteria and Galois groups. We give a uniform proof for all $n\geq256$ and complete the remaining degrees using the results collected in Theorem~1.8 and explicit integer comparisons.

The key arguments are two local bounds for the valuations of a resultant and their combination with a lower bound for the resultant obtained at real roots. The auxiliary polynomial and its differential identity come from the classical theory of Legendre functions; their explicit formulas and sources are given in Lemma~3.2.

\subsection{Preliminary theorems}

The notation needed in this subsection is specified in each theorem. Any further notation is defined in the section where it is used.

\begin{theorem}[Gauss' lemma and multiplicativity of the Gauss norm]
An integral polynomial whose coefficients have greatest common divisor $1$ is called primitive. A primitive polynomial is reducible in $\mathbb{Q}[x]$ if and only if it is a product of two nonconstant primitive integral polynomials. Let $K/\mathbb{Q}_p$ be a finite extension, normalize $v_p(p)=1$, and set $v_p(0)=+\infty$. For a nonzero polynomial $f=\sum_j f_jx^j$, define
\[
\|f\|_p=\max_j p^{-v_p(f_j)}.
\]
Then $\|fg\|_p=\|f\|_p\|g\|_p$. The same conclusion holds for multivariate polynomials when the norm is defined as the maximum of the absolute values of all coefficients.
\end{theorem}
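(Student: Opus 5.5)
The plan is to prove the multiplicativity $\|fg\|_p=\|f\|_p\|g\|_p$ first, because the statement about primitive polynomials will follow from it. The inequality $\|fg\|_p\le\|f\|_p\|g\|_p$ is immediate from the ultrametric inequality. Each coefficient of $fg$ is $\sum_{i+j=l}f_ig_j$, and every term has valuation at least $\min_i v_p(f_i)+\min_j v_p(g_j)$.

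For the reverse inequality I will use the standard ``first extremal index'' argument. Let $r$ be the smallest index with $v_p(f_r)=\min_i v_p(f_i)$, and let $s$ be the smallest index with $v_p(g_s)=\min_j v_p(g_j)$. The coefficient of $x^{r+s}$ in $fg$ is
\[
f_rg_s+\sum_{i<r}f_ig_{r+s-i}+\sum_{j<s}f_{r+s-j}g_j .
\]
In every term of the two sums, one factor has valuation strictly greater than its minimum and the other has valuation at least its minimum. So every term of the sums has valuation strictly larger than $v_p(f_rg_s)=v_p(f_r)+v_p(g_s)$. A strict ultrametric inequality then gives that the coefficient has valuation exactly $v_p(f_r)+v_p(g_s)$, which yields equality. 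This works verbatim over any finite extension $K/\mathbb{Q}_p$, since $v_p$ extends uniquely to $K$ as a valuation.

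For the multivariate case I will use the Kronecker substitution. Choose $N$ larger than all degrees appearing in $f$, $g$ and $fg$, and set $x_i\mapsto t^{N^{i-1}}$. This map is an injective ring homomorphism on polynomials of bounded degree. It sends distinct monomials to distinct powers of $t$, so it preserves the set of coefficients and hence the norm. It is also multiplicative, so the univariate case applies. Alternatively one can order monomials lexicographically and repeat the extremal-index argument; I will use whichever is shorter to write.

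Next I will deduce Gauss' lemma. If $f$ and $g$ are primitive integral polynomials, then $\|f\|_p=\|g\|_p=1$ for every prime $p$. Hence $\|fg\|_p=1$ for every $p$, so $fg$ is primitive.

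Now let $F$ be primitive and suppose $F=GH$ with $G,H\in\mathbb{Q}[x]$ nonconstant. I will write $G=c\,G_0$ and $H=d\,H_0$ with $c,d\in\mathbb{Q}$ and $G_0,H_0$ primitive integral; this is possible by clearing denominators and dividing by the content. Then $F=cd\,G_0H_0$, and $G_0H_0$ is primitive. Comparing contents gives $cd=\pm1$, so $F=(\pm G_0)H_0$. The converse direction is trivial.

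None of these steps is a real obstacle. The only delicate point is the strictness in the extremal-index argument, which is where choosing the \emph{smallest} minimizing indices $r$ and $s$ is essential.
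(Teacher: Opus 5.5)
Your proof is correct, and its core idea matches the paper's; what differs is how the argument is packaged.

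The paper's argument is two lines. It scales $f$ and $g$ so their coefficients lie in $\mathcal O_K$ with at least one unit coefficient, then reduces modulo the maximal ideal of $\mathcal O_K$. It uses only that the polynomial ring over the residue field $\mathbb F$ is an integral domain. Hence $\overline{fg}=\bar f\,\bar g\neq 0$, so $fg$ again has a unit coefficient.

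Your choice of the \emph{smallest} minimizing indices $r,s$ is exactly the explicit check of that nonvanishing. The lowest nonzero terms of $\bar f$ and $\bar g$ multiply to give a nonzero coefficient of $x^{r+s}$. So you avoid residue fields, at the cost of a coordinate computation.

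The real divergence is the multivariate case. The paper gets it at no cost, because $\mathbb F[x_1,\ldots,x_n]$ is a domain. You instead need the Kronecker substitution. That step is fine: the substitution is a ring homomorphism on all of $K[x_1,\ldots,x_n]$. It preserves the coefficient set of each of $f$, $g$, $fg$ once $N$ exceeds every partial degree of $fg$, and these partial degrees dominate those of $f$ and $g$. Your phrase ``injective ring homomorphism on polynomials of bounded degree'' should be read in this sense.

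Finally, you actually derive the Gauss-lemma equivalence from multiplicativity by comparing contents, which the paper only cites as standard. That deduction is correct as written, including the step $cd=\pm1$.
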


For polynomials with integral coefficients and at least one unit coefficient, the latter assertion follows because reduction modulo the maximal ideal gives an integral domain. Multiplying general polynomials by scalars to normalize them gives the stated multiplicativity.

\begin{theorem}[Valuations of factorials]
For a prime $p$ and a nonnegative integer $N$,
\[
v_p(N!)=\sum_{r\geq1}\left\lfloor\frac{N}{p^r}\right\rfloor.
\]
Here $\lfloor y\rfloor$ denotes the greatest integer not exceeding the real number $y$. If $s_2(N)$ denotes the sum of the binary digits of $N$, with $s_2(0)=0$, then $v_2(N!)=N-s_2(N)$. If $N<p^2$, then $v_p(N!)=\lfloor N/p\rfloor$.
\end{theorem}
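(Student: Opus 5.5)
The plan is to prove the three assertions of the theorem in order: Legendre's formula, then the binary identity $v_2(N!)=N-s_2(N)$, then the special case $N<p^2$.

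For Legendre's formula, count multiplicities. Write
\[
v_p(N!)=\sum_{i=1}^{N}v_p(i)=\sum_{i=1}^{N}\sum_{r\geq1}\mathbf{1}[p^r\mid i],
\]
which uses $v_p(i)=\#\{r\geq1: p^r\mid i\}$ for $i\geq1$. Exchanging the two finite sums (only $r$ with $p^r\le N$ contribute), the inner count for each fixed $r$ is the number of multiples of $p^r$ in $\{1,\dots,N\}$. That number is exactly $\lfloor N/p^r\rfloor$, which gives the first formula. The case $N=0$ is trivial, since both sides are $0$.

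For the binary identity, write $N=\sum_{i\geq0}\varepsilon_i2^i$ with $\varepsilon_i\in\{0,1\}$. Then $\lfloor N/2^r\rfloor=\sum_{i\geq r}\varepsilon_i2^{i-r}$, so
\[
\sum_{r\geq1}\Big\lfloor\frac{N}{2^r}\Big\rfloor=\sum_{i\geq1}\varepsilon_i\sum_{r=1}^{i}2^{i-r}=\sum_{i\geq0}\varepsilon_i(2^i-1)=N-s_2(N).
\]
Here the inner geometric sum equals $2^i-1$, and that expression also vanishes for $i=0$, so the $i=0$ term can be included freely. The same computation works for general $p$, giving $(N-s_p(N))/(p-1)$, but only $p=2$ is needed.

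For the last claim, suppose $N<p^2$. Then $N/p^r<1$ for every $r\geq2$, so those floors vanish and only the $r=1$ term $\lfloor N/p\rfloor$ survives.

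None of these steps is a genuine obstacle. The only points needing care are the interchange of summation, which is justified because all sums are finite with nonnegative terms, and the correct handling of the $i=0$ term in the binary expansion.
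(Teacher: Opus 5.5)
Your proof is correct and complete. Exchanging the two finite sums gives Legendre's formula, the digit-by-digit geometric sum gives $v_2(N!)=N-s_2(N)$, and the case $N<p^2$ follows because every term with $r\ge2$ vanishes. The paper gives no proof of this statement; it quotes it as a standard preliminary and applies it, for example, in Lemma~2.1 and Lemma~3.7. Your argument is the standard proof the paper implicitly relies on.
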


\begin{theorem}[Newton polygons]
Let $K$ be a complete discretely valued field, with a valuation $v$ satisfying $v(K^\times)=\mathbb{Z}$. For $f(X)=\sum_{j=0}^d f_jX^j\in K[X]$, first remove any zero roots. Its Newton polygon is the lower convex hull of the points $(j,v(f_j))$ for which $f_j\neq0$. If an edge has slope $-\eta$ and horizontal length $q$, then exactly $q$ roots satisfy $v(\alpha)=\eta$, counted with multiplicity. The roots corresponding to a given slope form a factor over $K$. If the absolute value of the vertical change along this edge is $1$, then the corresponding factor is irreducible over $K$.
\end{theorem}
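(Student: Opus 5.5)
The plan is the classical argument: extend $v$ to a splitting field and read off the Newton polygon from the elementary symmetric functions of the roots.

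\textbf{Setup.} Since $K$ is complete, $v$ extends uniquely to a valuation, still denoted $v$, on a splitting field $L$ of $f$. This extension is unique, so it is invariant under $\mathrm{Gal}(L/K)$, i.e.\ $v(\sigma\alpha)=v(\alpha)$ for every $K$-automorphism $\sigma$. After removing zero roots, write
\[
f=f_d\prod_{i=1}^d(X-\alpha_i),\qquad v(\alpha_1)\ge v(\alpha_2)\ge\cdots\ge v(\alpha_d),
\]
where all $\alpha_i\neq0$. Then $f_{d-r}=(-1)^rf_d\,e_r(\alpha_1,\dots,\alpha_d)$, with $e_r$ the elementary symmetric functions.

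\textbf{Step 1: the polygon is determined by the sorted root valuations.} Each monomial of $e_r$ is a product of $r$ distinct roots, so
\[
v(f_{d-r})\ge v(f_d)+\sum_{i=d-r+1}^{d}v(\alpha_i),
\]
the sum of the $r$ smallest root valuations. Suppose $r$ is a break index, meaning $v(\alpha_{d-r})>v(\alpha_{d-r+1})$, or $r=0$, or $r=d$. Then the monomial $\alpha_{d-r+1}\cdots\alpha_d$ is the unique monomial of $e_r$ of minimal valuation. The ultrametric inequality therefore gives equality in the display above.

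Hence the points $(j,v(f_j))$ lie on or above the piecewise linear graph joining the break points. That graph is convex, because its slopes are $-v(\alpha_i)$ taken in the chosen order. All of its vertices are actual points of the form $(j,v(f_j))$. So this graph is exactly the lower convex hull. Reading it off, an edge of slope $-\eta$ has horizontal length equal to $\#\{i: v(\alpha_i)=\eta\}$, counted with multiplicity. This is the first claim.

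\textbf{Step 2: each slope gives a factor over $K$.} Set
\[
g_\eta=\prod_{v(\alpha_i)=\eta}(X-\alpha_i).
\]
The multiset of its roots is permuted by $\mathrm{Gal}(L/K)$, since valuations are Galois invariant and $f$ has coefficients in $K$. So the coefficients of $g_\eta$ are Galois invariant. If $L/K$ is not separable, I would pass to the normal closure and use that invariance under all $K$-automorphisms places the coefficients in a purely inseparable extension. Uniqueness of the extended valuation together with $g_\eta\mid f$ then lets one conclude via the standard Hensel-type factorization. Alternatively, one obtains $g_\eta\in K[X]$ directly from Hensel's lemma applied to the Newton-polygon factorization. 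In either case $g_\eta\in K[X]$.

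\textbf{Step 3: irreducibility when the vertical change is $1$.} Suppose the edge has length $q$ and vertical change $1$. Then $\eta=1/q$ up to sign. Let $h\in K[X]$ be a monic factor of $g_\eta$ of degree $r$ with $1\le r\le q$. Its constant term is $\pm$ a product of $r$ roots, so its valuation is $r/q$. This constant term lies in $K$, so its valuation lies in $v(K^\times)=\mathbb Z$. Therefore $q\mid r$, so $r=q$, and $g_\eta$ is irreducible.

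\textbf{Main obstacle.} The main obstacle I expect is Step 2 in full generality, i.e.\ rationality of $g_\eta$ when the residue or field extension is inseparable. I would handle it through Hensel's lemma (the factorization lemma for polynomials with distinct slopes) rather than Galois descent.
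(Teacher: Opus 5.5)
Your proposal is correct and follows the same route as the paper. The paper cites Gouv\^ea for the root-valuation and factorization claims, and the argument there is your elementary-symmetric-function and Galois-invariance argument; the paper's justification of irreducibility is your Step~3 (a monic factor of degree $r$ has constant term of valuation $r\eta=\pm r/q\in\mathbb{Z}$, forcing $r=q$).

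The one weak spot is your inseparable detour in Step~2. There the sketch via a purely inseparable field of definition and an unspecified Hensel-type step is not an actual argument. It is also unnecessary. Apply the invariance to each monic irreducible factor $\pi$ of $f$ over $K$, rather than to the coefficients of $g_\eta$. Since $\mathrm{Aut}(L/K)$ acts transitively on the roots of $\pi$ even without separability, all roots of $\pi$ share one valuation. Hence $g_\eta$ is the product, with multiplicity, of the corresponding irreducible factors and lies in $K[X]$. In any case the paper only applies the theorem with $K=\mathbb{Q}_2$, where your separable argument already suffices.
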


For the statements concerning root valuations and factorization, see \cite[Section~6.4, pp.~212--230]{gouvea}. The final assertion also has a direct explanation: in this case $\eta=\pm1/q$. If a monic irreducible factor belonging to this slope has degree $d_0$, then the valuation of its constant term is $d_0\eta\in\mathbb{Z}$, so $q\mid d_0$. Since there are only $q$ roots belonging to this slope, no further factorization is possible.

\begin{theorem}[Basic properties of Legendre polynomials]
All roots of $P_n$ are simple and lie in $(-1,1)$. For $n\geq1$,
\begin{align}
(1-x^2)P_n''-2xP_n'+n(n+1)P_n&=0, \label{eq:1.2}\\
(n+1)P_{n+1}&=(2n+1)xP_n-nP_{n-1}, \label{eq:1.3}\\
(1-x^2)P_n'&=n(P_{n-1}-xP_n). \label{eq:1.4}
\end{align}
Moreover,
\begin{align}
\sum_{n\geq0}P_n(x)z^n&=(1-2xz+z^2)^{-1/2}, \label{eq:1.5}\\
\int_{-1}^1 q(x)P_n(x)\,dx&=0\qquad(\deg q<n), \label{eq:1.6}\\
|P_n(x)|&\leq1\qquad(-1\leq x\leq1). \label{eq:1.7}
\end{align}
Equation~(1.5) is used as an identity of formal power series.
\end{theorem}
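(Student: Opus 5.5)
All of these classical facts follow from Rodrigues' formula (1.1); I would derive them in an order where each step feeds the next.

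\emph{Step 1: generating function.} Starting from (1.1), I would verify (1.5) via Cauchy's integral formula for the $n$-th derivative. The resulting contour integral $\frac{1}{2\pi i}\oint \frac{(t^2-1)^n}{2^n(t-x)^{n+1}}\,dt$ is summed over $n$ as a geometric series, and a residue computation gives $(1-2xz+z^2)^{-1/2}$. Because the coefficients are polynomials, the identity then holds formally.

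\emph{Step 2: recurrences and the differential equation.} With $G=(1-2xz+z^2)^{-1/2}$, I differentiate (1.5) in $z$ and use
\[
(1-2xz+z^2)\,\partial_z G=(x-z)G .
\]
Comparing coefficients of $z^n$ gives the three-term recurrence (1.3). Differentiating in $x$ instead gives $(1-2xz+z^2)\partial_xG=zG$, which yields $P_{n+1}'-2xP_n'+P_{n-1}'=P_n$. Combining this with the derivative of (1.3) produces the relations $P_{n+1}'-xP_n'=(n+1)P_n$ and $xP_n'-P_{n-1}'=nP_n$. Eliminating $P_{n-1}'$ among these gives (1.4). Differentiating (1.4) and substituting $P_{n-1}'=xP_n'-nP_n$ then gives the Legendre equation (1.2).

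\emph{Step 3: orthogonality and roots.} For (1.6) I integrate by parts $n$ times against $\frac{d^n}{dx^n}(x^2-1)^n$. The boundary terms vanish because $(x^2-1)^n$ has zeros of order $n$ at $\pm1$, and the final integrand contains $q^{(n)}=0$ since $\deg q<n$. For the roots, suppose $P_n$ changed sign at only $r<n$ points of $(-1,1)$, say $x_1,\dots,x_r$. Then $q=\prod(x-x_i)$ would make $qP_n$ of constant sign and not identically zero, contradicting (1.6). So $P_n$ has $n$ sign changes in $(-1,1)$, and hence $n$ simple roots there.

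\emph{Step 4: the bound (1.7).} This is the step I expect to need the most care. I would use the Laplace integral
\[
P_n(x)=\frac1\pi\int_0^\pi\bigl(x+\sqrt{x^2-1}\cos\varphi\bigr)^n\,d\varphi .
\]
It can be checked by expanding the integrand, or by integrating the series $\sum z^n(\cdots)^n$ over $\varphi$ to recover (1.5) via $\int_0^\pi\frac{d\varphi}{a-b\cos\varphi}=\pi/\sqrt{a^2-b^2}$. For $|x|\le1$ the base $x+i\sqrt{1-x^2}\cos\varphi$ has modulus $\sqrt{x^2+(1-x^2)\cos^2\varphi}\le1$, so $|P_n(x)|\le1$.

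As an alternative for (1.7), Step 2 gives the function $F=P_n^2+\frac{1-x^2}{n(n+1)}P_n'^2$. Using (1.2), its derivative is $F'=\frac{2x}{n(n+1)}P_n'^2$, so $F$ is decreasing on $[-1,0]$ and increasing on $[0,1]$. Hence $P_n^2\le F\le\max(F(1),F(-1))=1$, where the last equality uses $P_n(\pm1)^2=1$ from (1.1).

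Since these are standard facts, in the paper I would present this as a sketch with a citation rather than grinding through all the computations.
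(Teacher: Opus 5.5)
Your derivation is correct, but it takes a different route from the paper. The paper gives no argument for Theorem~1.5 beyond citing the DLMF for each item (Sections~14.10, 18.2, 18.3, 18.14 and Eq.~14.7.19). Its only addition is that orthogonality follows from (1.1) by integrating by parts $n$ times, which is exactly your Step~3.

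You instead derive everything in a single chain from Rodrigues' formula:
\begin{itemize}
\item Schl\"afli's integral gives (1.5).
\item The two first-order identities for $G$ give (1.3) and $P_{n+1}'-2xP_n'+P_{n-1}'=P_n$.
\item Combining these yields $xP_n'-P_{n-1}'=nP_n$ and $P_{n+1}'-xP_n'=(n+1)P_n$.
\item (1.4) follows once you use the second of these with $n$ replaced by $n-1$. You need that shift to have two equations containing $P_{n-1}'$; ``eliminating among these'' as written does not quite say so.
\item Differentiating (1.4) gives (1.2).
\item The sign-change argument then locates the roots.
\end{itemize}
I checked each of these computations and they go through. The citation buys brevity for textbook material, while your version buys self-containment and makes the logical order explicit.

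Of your two arguments for (1.7), the second fits this paper better. Here $n(n+1)F$ is exactly the function $E_n=(1-x^2)P_n'^2+n(n+1)P_n^2$ of Lemma~5.4, and the paper derives the identity $E_n'=2xP_n'^2$ from (1.2) there anyway. So this route gives (1.7) with no extra machinery. The Laplace integral, by contrast, needs its own verification. That verification is also what justifies using it for $|x|\le1$: only even powers of $\sqrt{x^2-1}$ survive the $\varphi$-integration, so the right side is a polynomial in $x$ and the choice of branch is irrelevant.
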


These standard formulas and the location of the roots may be found in \cite{dlmf}: the recurrence and derivative formulas are given in Section~14.10, the generating function in Eq.~14.7.19, orthogonality and properties of the roots in Sections~18.2 and~18.3, and the bound on the interval in Section~18.14. Orthogonality also follows from~(1.1) by integrating by parts $n$ times.

\begin{theorem}[Hadamard's inequality]
If the column vectors of a real or complex $k\times k$ matrix are $w_1,\ldots,w_k$, then
\[
|\det(w_1,\ldots,w_k)|\leq\prod_{i=1}^k\|w_i\|_2.
\]
\end{theorem}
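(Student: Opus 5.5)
We prove Hadamard's inequality by reducing to an orthogonal (or unitary) factorization of the matrix. Let $M=(w_1,\ldots,w_k)$. If the columns are linearly dependent, then $\det M=0$ and there is nothing to prove. Otherwise we apply the Gram--Schmidt process to $w_1,\ldots,w_k$, taking the standard Hermitian inner product in the complex case. This gives orthonormal vectors $e_1,\ldots,e_k$ with
\[
w_i=\sum_{j\le i} r_{ji}e_j,\qquad r_{ii}>0 .
\]
Equivalently, $M=QR$, where $Q$ is unitary (orthogonal in the real case) and $R=(r_{ji})$ is upper triangular.

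The key steps run as follows. First, $|\det Q|=1$, because $Q^*Q=I$ gives $|\det Q|^2=1$. Hence
\[
|\det M|=|\det R|=\prod_{i=1}^k r_{ii}.
\]
Second, the $e_j$ are orthonormal, so Pythagoras gives
\[
\|w_i\|_2^2=\sum_{j\le i}|r_{ji}|^2\ge r_{ii}^2 .
\]
Hence $r_{ii}\le\|w_i\|_2$ for each $i$. Multiplying these $k$ inequalities gives $|\det M|\le\prod_i\|w_i\|_2$.

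An alternative route avoids Gram--Schmidt. One normalizes the columns to unit length, which scales both sides of the inequality equally. The Gram matrix $G=M^*M$ then has unit diagonal and is positive semidefinite. The AM--GM inequality on its eigenvalues gives $\det G\le(\operatorname{tr}G/k)^k=1$, so $|\det M|^2=\det G\le1$.

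The only point requiring care is the degenerate case. This is handled either by the dependence remark at the start, or by noting that a zero column makes both sides vanish. Beyond that, I expect no real obstacle, since the result is classical. In the paper it will be applied to a Vandermonde-type matrix built from Chebyshev polynomial values, to bound the product of squared root differences.
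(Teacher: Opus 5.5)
Your proof is correct and follows the paper's own argument, which is Gram--Schmidt orthogonalization together with the fact that orthogonal projection does not increase length. Your Pythagoras bound $r_{ii}\le\|w_i\|_2$ is exactly that fact, with $r_{ii}$ being the length of the projection of $w_i$ onto the orthogonal complement of $\operatorname{span}(w_1,\ldots,w_{i-1})$; the Gram-matrix/AM--GM alternative you sketch is also valid, but it is not needed.
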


This follows from Gram--Schmidt orthogonalization and the fact that orthogonal projection does not increase length.

\begin{theorem}[Hanson {\cite[Theorem~1]{hanson}}]
For every positive integer $N$,
\[
\operatorname{lcm}(1,2,\ldots,N)<3^N.
\]
Consequently, if $\vartheta(y)=\sum_{p\leq y}\log p$, where the sum is over primes and an empty sum is taken to be zero, then
\begin{equation}
\vartheta(y)<y\log3\qquad(y>0).
\label{eq:1.8}
\end{equation}
\end{theorem}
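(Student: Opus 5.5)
The statement is Hanson's theorem $\operatorname{lcm}(1,\ldots,N)<3^N$ together with the consequence~(1.8). I would take the first inequality directly from \cite[Theorem~1]{hanson} and spend the effort on deducing~(1.8), which is a short comparison between $\vartheta$ and the logarithm of an lcm. (If a self-contained argument for the lcm bound were wanted, the classical route goes through central binomial coefficients, but that only gives constants near $4$. Hanson's constant $3$ comes from a finer product of multinomial coefficients built from Sylvester's sequence $2,3,7,43,\ldots$, so I would simply cite it.)

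For~(1.8), I would first treat $y<2$. There the sum is empty, so $\vartheta(y)=0<y\log3$ because $y>0$. For $y\ge2$, I would set $N=\lfloor y\rfloor\ge1$. Since primes are integers, $\vartheta(y)=\vartheta(N)$. Every prime $p\le N$ divides $\operatorname{lcm}(1,\ldots,N)$, since $p$ itself occurs among $1,\ldots,N$. Distinct primes are coprime, so their product $\prod_{p\le N}p$ also divides $\operatorname{lcm}(1,\ldots,N)$. Hence
\[
\vartheta(y)=\log\prod_{p\le N}p\le\log\operatorname{lcm}(1,\ldots,N)<N\log3\le y\log3 .
\]
The strict inequality comes from Hanson's bound at the positive integer $N$.

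The only point needing care is to make the deduction valid for all real $y>0$, not just integers. This is handled by the floor reduction and the separate empty-sum case. The genuine difficulty, the constant $3$ itself, lies entirely inside the cited result.
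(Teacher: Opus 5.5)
Your proposal is correct and follows essentially the same route as the paper: cite Hanson for the bound on $\operatorname{lcm}(1,\ldots,N)$, then deduce (1.8) by noting that the product of the primes up to $\lfloor y\rfloor$ divides $\operatorname{lcm}(1,\ldots,\lfloor y\rfloor)$. You handle the empty-sum case for $0<y<2$ where the paper treats $0<y<1$ separately; this difference is immaterial.
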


For $y\geq1$, the product of the primes not exceeding $\lfloor y\rfloor$ divides the least common multiple above, so taking logarithms gives~(1.8). The case $0<y<1$ is immediate. Throughout the paper, $\log$ denotes the natural logarithm.

\begin{theorem}[Known irreducibility results {\cite[Theorem~1.9(b) and Corollary~3.4(b)]{cullinan}}]
\leavevmode
\begin{enumerate}
\item For $1\le j\le60$, both $P_{2j}(x)$ and $P_{2j+1}(x)/x$ are irreducible in $\mathbb Q[x]$.
\item If $p$ is an odd prime and $n\ge2$ satisfies $p-4\le n\le p+3$, then $P_n(x)$ for even $n$, and $P_n(x)/x$ for odd $n$, are irreducible in $\mathbb Q[x]$.
\end{enumerate}
\end{theorem}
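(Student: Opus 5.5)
Part~1 is a finite statement. I would verify it by direct computation: for each $n\le 121$, form the integral primitive polynomial obtained from $2^{n}P_n(x)$ (or $2^nP_n(x)/x$), substitute $t=x^2$, and check irreducibility by exact factorization over $\mathbb Q$. Since this is exactly the computation reported in \cite[Theorem~1.9(b)]{cullinan}, I would simply cite it.

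For part~2 I would use Newton polygons (Theorem~1.4) at the prime $p$, applied directly in the variable $x$. From Rodrigues' formula~(1.1),
\[
2^nP_n(x)=\sum_{k=0}^{m}(-1)^k\binom{n}{k}\binom{2n-2k}{n}x^{n-2k},\qquad m=\lfloor n/2\rfloor .
\]
I would divide by $x$ when $n$ is odd and then pass to $f(t)$ with $f(t^2)$ equal to this polynomial, so that $\deg f=m$. Since $p$ is odd, the factor $2^n$ does not affect $p$-adic valuations. Theorem~1.3 computes $v_p\bigl(\binom{n}{k}\binom{2n-2k}{n}\bigr)$ explicitly: because $n\le p+3<p^2$, each factorial valuation is just $\lfloor\cdot/p\rfloor$, so every valuation lies in $\{0,1,2\}$. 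The pattern is governed by which of $2n-2k$, $n$, $n-2k$, $k$, $n-k$ cross the thresholds $p$ and $2p$.

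I would treat separately the finitely many shapes $n=p+r$ with $-4\le r\le 3$. In each shape the goal is to show that the Newton polygon of $f$ (in $t$) has a single edge joining an endpoint of valuation $0$ to an endpoint of valuation $1$, with every interior coefficient of valuation $\ge 1$. Equivalently, $f$ should be Eisenstein-like at $p$ up to reversal. Then every root $\beta$ of $f$ has $v_p(\beta)=\pm1/m$, so every root $\alpha=\sqrt\beta$ of the original polynomial has $v_p(\alpha)=\pm 1/(2m)$. By the final assertion of Theorem~1.4, the original polynomial of degree $2m$ in $x$ is irreducible over $\mathbb Q_p$, hence over $\mathbb Q$. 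The point of working with root valuations is that it gives irreducibility in $x$ directly, not merely in $t$.

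The main obstacle is that the valuation pattern is not uniformly Eisenstein across the eight offsets. For instance, when $n<p$ only the coefficients with $2n-2k\ge p$ pick up a factor $p$. In that case the Newton polygon may have two edges, and irreducibility has to come from a different argument. Here I would first try to show that one edge has vertical change $1$ and length $m$ in $t$ after the reversal $t\mapsto 1/t$. If that fails for some offset, I would combine two Newton polygon constraints, at $p$ and at a second nearby prime or at $2$, to rule out every possible splitting degree. If even that does not close the case, I would fall back on the explicit case analysis of \cite[Corollary~3.4(b)]{cullinan}, which the paper cites for exactly this statement.
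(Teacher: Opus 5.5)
\paragraph{Part~1 and the fallback.} Your treatment of part~1, and your final fallback in part~2, agree with the paper. The paper gives no argument for this theorem: both parts are quoted from Cullinan--Hajir (Theorem~1.9(b) and Corollary~3.4(b), the latter being Holt's result completed by Wahab). One correction to your description of the computation in part~1. Irreducibility of the polynomial in $t$ does not by itself imply irreducibility in $x$ (compare $t-4$ with $x^2-4$). It suffices here only because Lemma~2.2 forces every rational factor of $M_n$ to be even.

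\paragraph{Where your Newton-polygon argument stops.} Your own route for part~2 does not close, and the failure can be located exactly. Put $c_k=\binom{n}{k}\binom{2n-2k}{n}$, which up to sign is the coefficient of $t^{m-k}$ in your $f$. For $p\ge7$ the valuations lie in $\{0,1\}$:
\begin{itemize}
\item if $n=p+r$ with $0\le r\le3$, then $v_p(c_k)=0$ exactly for $k\le\lfloor r/2\rfloor$;
\item if $n=p-s$ with $1\le s\le4$, then $v_p(c_k)=1$ exactly when $2k\le n-s$.
\end{itemize}
So for $n\in\{p-2,p-1,p,p+1\}$ only one extreme coefficient is a unit, and $f$ is Eisenstein or reversed Eisenstein. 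Then $f(x^2)$ is Eisenstein in $x$ as well, and your argument is complete for these offsets.

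For $n\in\{p-4,p-3,p+2,p+3\}$, however, the two coefficients at one end are both units. The polygon is then a horizontal edge of length $1$ together with an edge of length $m-1$ and height $1$. Since $t\mapsto1/t$ only reflects the polygon, your first repair (an edge of length $m$ after reversal) cannot work. The $p$-adic data are consistent with $M_n$ having a rational factor $wx^2-u$ whose roots are $p$-adic units.

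\paragraph{What the second repair buys.} It helps only partly. By Lemma~2.2 (which rests on Wahab's $2$-adic polygon, Theorem~1.9), a rational factor of degree~$2$ requires $2$ to be a binary place value of $n$. So the four remaining offsets are settled when $n\equiv0,1\pmod{4}$.

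For $n\equiv2,3\pmod{4}$, nothing in your plan excludes the quadratic factor, and a second prime with a better offset need not exist. Take $n=123$:
\begin{itemize}
\item it lies only in the window of $p=127$, with offset $-4$;
\item none of $122,\dots,125$ is prime, so no Eisenstein offset is available;
\item $123=64+32+16+8+2+1$ contains the place value $2$.
\end{itemize}
Hence the $127$-adic and $2$-adic polygons are jointly compatible with $M_{123}=(wx^2-u)\,G$ with $\deg G=120$. This is precisely a degree for which the paper invokes Theorem~1.8(2).

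Ruling out such a quadratic factor is an extra ingredient your plan does not supply. Without it, your argument proves part~2 only for $p-2\le n\le p+1$, and for the other four offsets only when $n\equiv0,1\pmod{4}$. Everything else rests on the same citation the paper uses.
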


The first assertion is the irreducibility consequence of the Galois-group computation in \cite[Theorem~1.9(b)]{cullinan}, with original degree $2j$ or $2j+1$. The second is \cite[Corollary~3.4(b)]{cullinan}, a result of Holt completed by Wahab. These results will be used in Section~6.

\begin{theorem}[Wahab's $2$-adic Newton polygon]
Let the binary expansion of the positive integer $n$ be
\[
n=q_1+\cdots+q_s,\qquad q_1>\cdots>q_s,
\]
where each $q_i$ is a power of $2$. Put $K_0=0$ and $K_i=q_1+\cdots+q_i$. Then the vertices of the $2$-adic Newton polygon of $P_n(2X+1)$ are exactly
\[
(0,0),(K_1,1),\ldots,(K_s,s).
\]
Thus the $i$th edge has horizontal length $q_i$, vertical increment $1$, and slope $1/q_i$.
\end{theorem}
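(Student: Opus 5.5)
The plan is to make the coefficients of $P_n(2X+1)$ explicit, compute their $2$-adic valuations with Kummer's theorem, and then check the lower-convexity condition directly. From Rodrigues' formula~(1.1), or from the hypergeometric form $P_n(x)={}_2F_1(-n,n+1;1;(1-x)/2)$, one has the classical expansion
\[
P_n(x)=\sum_{k=0}^n\binom{n}{k}\binom{n+k}{k}\Bigl(\frac{x-1}{2}\Bigr)^k .
\]
Substituting $x=2X+1$ gives $P_n(2X+1)=\sum_k c_kX^k$ with $c_k=\binom nk\binom{n+k}{k}$. The constant term is $c_0=1$, so there is no zero root. By Kummer's theorem (equivalently, $v_2(N!)=N-s_2(N)$ from Theorem~1.3),
\[
v_2(c_k)=\bigl(s_2(k)+s_2(n-k)-s_2(n)\bigr)+\bigl(s_2(k)+s_2(n)-s_2(n+k)\bigr).
\]
Here the first bracket counts the carries in $k+(n-k)$ and the second counts the carries in $k+n$.

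\emph{Step 1: the vertices.} Take $k=K_i$. Then $K_i$ consists of the top $i$ binary digits of $n$, so $n-K_i$ has disjoint support and $\binom n{K_i}$ is odd. For the second factor, $n+K_i=2K_i+(n-K_i)$ with disjoint supports, so $s_2(n+K_i)=i+(s-i)=s$. This gives $v_2(c_{K_i})=i+s-s=i$. Hence the points $(K_i,i)$ lie on the polygon's candidate vertices.

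\emph{Step 2: convexity and strictness.} The successive slopes are $1/q_1<1/q_2<\cdots<1/q_s$, so the broken line $L$ through these points is convex. For $K_{i-1}<k<K_i$, the value $L(k)=i-1+(k-K_{i-1})/q_i$ lies strictly between $i-1$ and $i$ and is not an integer. It therefore suffices to prove
\[
v_2(c_k)\ge i\qquad\text{whenever }K_{i-1}<k\le n .
\]
Since valuations are integers, this gives $v_2(c_k)\ge\lceil L(k)\rceil>L(k)$ at every non-vertex $k$. Consequently no point lies on or below $L$ except the $(K_i,i)$, and these are exactly the vertices.

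\emph{Step 3: the carry count (main obstacle).} We must show that when $k>K_{i-1}$, the two additions $k+(n-k)$ and $k+n$ produce at least $i$ carries in total. I would argue bit by bit along the top $i-1$ one-bits of $n$, at positions $e_1>\dots>e_{i-1}$, together with the next one-bit $e_i$. Write $k$ in binary and compare it with $K_{i-1}$.

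If $k$ has a $1$ in a position $\ge e_j$ for some $j\le i-1$ at which $n$ has a $1$, that position forces a carry in $k+n$. If instead $k$ has a $1$ in a position where $n$ has a $0$ above $e_i$, then $k+(n-k)$ must carry there, since $n$ has a $0$ there. The inequality $k>K_{i-1}$ guarantees that $k$ has a one-bit in some position at or above $e_i$ that does not simply coincide with the pattern of $K_{i-1}$.

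Carries propagate, and a carry into a $1$-position of $n$ in the sum $k+n$ continues upward. I expect to show that each of the positions $e_1,\dots,e_{i-1}$ contributes a carry in one of the two sums, and that the excess of $k$ over $K_{i-1}$ yields one further carry. A cleaner route may be the factorization $c_k=\binom{2k}{k}\binom{n+k}{2k}$. This gives $v_2(c_k)\ge s_2(k)$ plus the carries in $2k+(n-k)$, and one then argues that $s_2(k)$ together with these carries is at least $i$ by comparing $k$ with $K_{i-1}$ digit by digit. This case analysis is where I expect the real work to lie.
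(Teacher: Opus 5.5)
Your Steps 1 and 2 are correct. The expansion $P_n(2X+1)=\sum_k\binom nk\binom{n+k}{k}X^k$ is right, and so is the computation $v_2(c_{K_i})=i$. The reduction of the theorem to the inequality $v_2(c_k)\ge i$ for $K_{i-1}<k\le n$ is also sound. However, Step 3 carries the whole content of the theorem, and \emph{it is not proved}: you only describe what you expect to show.

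The heuristic you sketch is also wrong as stated. You claim that $k>K_{i-1}$ forces a one-bit of $k$ at or above $e_i$ outside the pattern of $K_{i-1}$, and this is false. Take $n=6=4+2$, $i=2$, $K_1=4$, $k=5=101_2$. The only extra bit of $k$ is at position $0<e_2=1$. The second carry, which gives $v_2(c_5)=v_2(6\cdot 462)=2$, is the carry out of position $0$ in $5+1$, below $e_i$. Your case of a one-bit of $k$ above $e_i$ where $n$ has a $0$ is in fact vacuous. The reason is the hypothesis $k\le n$, which your Step 3 sketch never uses.

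That hypothesis is the missing idea, and with it Step 3 takes two lines. For $i\ge2$ we have $n-K_{i-1}=q_i+\cdots+q_s<2q_i\le q_{i-1}$, and $K_{i-1}$ is divisible by $q_{i-1}$. Hence $K_{i-1}<k\le n$ forces $k=K_{i-1}+k'$ with $1\le k'<q_{i-1}$. The binary digits of $K_{i-1}$ and $k'$ are disjoint, so $s_2(k)=(i-1)+s_2(k')\ge i$. For $i=1$ this is just $k\ge1$. Your own factorization $c_k=\binom{2k}{k}\binom{n+k}{2k}$ then gives
\[
v_2(c_k)\ge v_2\binom{2k}{k}=s_2(k)\ge i,
\]
so there is no need to count carries in $2k+(n-k)$ or compare digits case by case.

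With this inserted, your argument becomes a complete, elementary proof. The paper itself does not prove this statement; it cites Wahab's Theorem~3.1, restated as Cullinan--Hajir, Theorem~7.3. Your explicit-coefficient route would therefore make this ingredient self-contained.
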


This result is given by Wahab \cite[Theorem~3.1]{wahab} and restated by Cullinan and Hajir \cite[Theorem~7.3]{cullinan}.

\subsection{Outline of the proof}

We argue by contradiction, ruling out a nontrivial factorization of a Legendre polynomial by obtaining upper and lower bounds for the same resultant.

Write $n=2m+\varepsilon$, where $\varepsilon\in\{0,1\}$, and put
\[
\kappa_n=2^{v_2(n!)},\qquad M_n(x)=\frac{\kappa_nP_n(x)}{x^\varepsilon}.
\]
Section~2 proves that $M_n$ is a primitive integral polynomial and that each of its rational factors is an even polynomial. Hence, if $M_n$ is reducible, we may write
\[
M_n(x)=A(x^2)B(x^2),\qquad H(t)=A(t)B(t),
\]
\[
1\leq k=\deg A\leq e=\deg B,\qquad k+e=m,
\]
where $A,B\in\mathbb{Z}[t]$. Since $H$ has no repeated roots, the resultant
\[
R=\operatorname{Res}_t(A,B)
\]
is a nonzero integer. We estimate its odd part
\[
\mathcal{R}=\frac{|R|}{2^{v_2(R)}}.
\]
The upper and lower bounds below are both established for $n\geq64$.

The upper bound comes from valuations at odd primes. We use the classical auxiliary polynomial
\[
U_n(x)=\sum_{j=1}^n\frac{P_{j-1}(x)P_{n-j}(x)}{j}
\]
and its identity
\[
(1-x^2)(P_n'U_n-P_nU_n')=1-P_n^2.
\]
At a root $\alpha$ of $P_n$, this becomes
\[
(1-\alpha^2)P_n'(\alpha)U_n(\alpha)=1.
\]
Section~3 uses this identity and orthogonality to control valuations of derivatives at the roots. Expressing both the derivatives and the resultant as products of root differences then yields
\[
v_p(R)\leq k\lfloor\log_p(2n-1)\rfloor
\qquad(p\text{ an odd prime}).
\]
Here $\log_p y=\log y/\log p$. In addition, the three-term recurrence, the differential equation, and divisibility properties of the coefficients give
\[
v_p(R)=0\qquad(p>m).
\]
Section~4 shows that, with $q=\lfloor\sqrt{2n-1}\rfloor$,
\[
\mathcal R^{1/k}\le
\operatorname{lcm}(1,\ldots,m)\operatorname{lcm}(1,\ldots,q)
<3^{m+q}.
\]
Thus
\[
\frac{\log\mathcal R}{k}
<\bigl(m+\sqrt{2n-1}\bigr)\log3.
\]

The lower bound comes from derivatives at real roots. Let $a>0$ be the leading coefficient of $A$, and let its roots be $t_1,\ldots,t_k$. At these roots, $H'=A'B$. Multiplying these identities and using the root-product formula for the resultant gives
\[
|R|=\frac{a^{e-k}\displaystyle\prod_{i=1}^k|H'(t_i)|}
{\displaystyle\prod_{1\leq i<j\leq k}(t_i-t_j)^2}.
\]
Thus we need a lower bound for the derivatives in the numerator and an upper bound for the product of root differences in the denominator. In Section~5, the Legendre differential equation gives
\[
\frac{d}{dx}\bigl((1-x^2)P_n'(x)^2+n(n+1)P_n(x)^2\bigr)
=2xP_n'(x)^2.
\]
Evaluating the expression at $x=0$ and at a positive root of $P_n$ yields $|H'(t_i)|>\kappa_n\sqrt m$. On the other hand, Chebyshev polynomials and Theorem~1.6 give
\[
\prod_{1\leq i<j\leq k}(t_i-t_j)^2\leq\frac{k^k}{2^{2(k-1)^2}}.
\]
Together with the estimates for the leading coefficients and $v_2(R)$ in that section, this yields
\[
\frac{\log\mathcal R}{k}>
m\log4-\log\frac{4(m+1)^2}{\sqrt m}.
\]
Finally, Section~6 proves that for every $n\ge256$,
\[
m\log(4/3)>
\sqrt{4m+1}\log3+\log\frac{4(m+1)^2}{\sqrt m}.
\]
The lower bound then strictly exceeds the upper bound, so the hypothetical factorization cannot exist. Theorem~1.8 handles all but fifteen of the remaining degrees; explicit integer comparisons using the same bounds handle those fifteen, proving the main theorem.

\section{Reduction and construction of the resultant}

In this section we turn a hypothetical rational factorization into a factorization by primitive integral polynomials in the squared variable, and construct the resultant to be estimated later. Fix an integer $n\geq2$ and write
\[
n=2m+\varepsilon,\qquad m=\lfloor n/2\rfloor,\qquad\varepsilon\in\{0,1\}.
\]
Define
\begin{equation}
\kappa_n=2^{v_2(n!)},\qquad M_n(x)=\kappa_n\frac{P_n(x)}{x^\varepsilon}.
\label{eq:2.1}
\end{equation}
Here $v_2$ is the $2$-adic valuation normalized by $v_2(2)=1$, and also denotes its extension to an algebraic closure of $\mathbb{Q}_2$; $\mathbb{Q}_2$ denotes the field of $2$-adic numbers. We write $\operatorname{lc}(f)$ for the leading coefficient of a nonzero polynomial $f$.

\subsection{Integral polynomials and their factors}

\begin{lemma}
The polynomial $M_n$ is a primitive integral polynomial with odd leading coefficient.
\end{lemma}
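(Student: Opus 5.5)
Our plan is to work from the explicit coefficient formula obtained from Rodrigues' formula~(1.1). Expanding $(x^2-1)^n$ and differentiating $n$ times gives
\[
P_n(x)=\frac{1}{2^n}\sum_{i}(-1)^i\binom{n}{i}\binom{2n-2i}{n}x^{n-2i},
\]
the sum running over $0\le i\le m$. Dividing by $x^\varepsilon$, the coefficients of $P_n/x^\varepsilon$ are $2^{-n}(-1)^i\binom{n}{i}\binom{2n-2i}{n}$ for $0\le i\le m$. We therefore need three facts.
\begin{itemize}
\item[(i)] Every coefficient has $2$-adic valuation at least $-v_2(n!)$.
\item[(ii)] The leading coefficient ($i=0$) has valuation exactly $-v_2(n!)$.
\item[(iii)] The $2$-parts are the only obstruction to integrality, and no odd prime divides all the numerators.
\end{itemize}

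For (ii), the leading coefficient is $2^{-n}\binom{2n}{n}$. By Theorem~1.3, $v_2\binom{2n}{n}=2s_2(n)-s_2(2n)=s_2(n)$. Its valuation is therefore $s_2(n)-n=-v_2(n!)$, so $\kappa_n\cdot 2^{-n}\binom{2n}{n}$ is an odd integer. This settles the odd leading coefficient, and in particular gives primitivity at $2$.

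For (i), rewrite the coefficient as
\[
\binom{n}{i}\binom{2n-2i}{n}=\frac{(2n-2i)!}{i!\,(n-i)!\,(n-2i+\varepsilon\cdot 0)!}\cdots .
\]
It is cleaner to use the form
\[
\frac{(2n-2i)!}{i!\,(n-i)!\,(n-2i)!},
\]
which is an integer (a multinomial-type quantity: $\binom{2n-2i}{n-i}\binom{n-i}{i}$). Its valuation is
\[
v_2\binom{2n-2i}{n-i}+v_2\binom{n-i}{i}\ge s_2(n-i)\ge 0.
\]
So we need $s_2(n-i)-n\ge -v_2(n!)=s_2(n)-n$. That would require $s_2(n-i)\ge s_2(n)$, which can fail, so we must keep the second factor: use
\[
v_2\binom{n-i}{i}=s_2(i)+s_2(n-2i)-s_2(n-i).
\]
The total valuation is then $s_2(i)+s_2(n-2i)\ge s_2(i+(n-2i)+\dots)$. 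More precisely, by subadditivity, $s_2(i)+s_2(n-2i)\ge s_2(n-i)$; with one more copy of $i$ this is not directly $s_2(n)$. Instead we compare with $\binom{n}{i}$ directly, using
\[
v_2\Bigl(\binom{n}{i}\binom{2n-2i}{n}\Bigr)\ge v_2\binom{2n-2i}{n-i}\text{-type counts}
\]
and the identity
\[
\binom{n}{i}\binom{2n-2i}{n}=\binom{2n-2i}{n-i}\binom{n-i}{i}\frac{\ldots}{\ldots}.
\]
We expect this bookkeeping to be the main obstacle. Our fallback is Theorem~1.9: the $2$-adic Newton polygon of $P_n(2X+1)$ has its lowest point at $(0,0)$ and vertices with valuations up to $s$. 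This shows that $P_n(2X+1)$ has all coefficients of valuation $\ge0$ relative to normalization $P_n(1)=1$. Combined with the known formula, this says the denominators of $P_n$ are powers of $2$ bounded by the leading one. Concretely, the maximal $2$-adic denominator among coefficients equals that of the leading coefficient, via Theorem~1.2: $\|P_n\|_2=\|\operatorname{lc}\|_2$ is preserved under the unimodular substitution $x\mapsto 2X+1$ up to the factor accounted for by Wahab's polygon. Dividing by $x^\varepsilon$ does not change coefficients.

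For (iii), odd primes never occur in denominators, since the denominators are powers of $2$. For primitivity at an odd prime $p$, it suffices to exhibit a coefficient prime to $p$. Here we use $P_n(1)=1$, or better $P_n(2X+1)$: its constant term is $1$, so $\kappa_nP_n(2X+1)$ has constant term $\kappa_n$, a power of $2$. If an odd $p$ divided all coefficients of $\kappa_nP_n(x)$, it would divide $\kappa_nP_n(1)=\kappa_n$, which is impossible. Division by $x^\varepsilon$ preserves the coefficient multiset, so $M_n$ is primitive.
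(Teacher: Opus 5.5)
You have a genuine gap in step (i), the $2$-integrality of $M_n$. Without it you have not shown $M_n\in\mathbb{Z}[x]$, so (ii) and (iii) do not yet prove the lemma. Those two parts are correct, and your overall route matches the paper's: the explicit coefficient formula, $v_2(N!)=N-s_2(N)$, the odd leading coefficient, and primitivity at odd primes from $M_n(1)=\kappa_n$.

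You correctly reached $v_2\bigl(\tfrac{(2n-2i)!}{i!\,(n-i)!\,(n-2i)!}\bigr)=s_2(i)+s_2(n-2i)$, and you needed $s_2(i)+s_2(n-2i)\ge s_2(n)$. You then abandoned this, saying that with ``one more copy of $i$'' it does not give $s_2(n)$. The missing observation is that doubling does not change the binary digit sum, $s_2(i)=s_2(2i)$. Subadditivity of $s_2$ (carries never increase the digit sum) then gives $s_2(i)+s_2(n-2i)=s_2(2i)+s_2(n-2i)\ge s_2(n)$, which is exactly what you need. This is the paper's one-line argument: it writes $v_2(d_{n,j})=-n+s_2(j)+s_2(n-2j)$ using $s_2(2n-2j)=s_2(n-j)$. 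Incidentally, $\binom{n}{i}\binom{2n-2i}{n}=\binom{2n-2i}{n-i}\binom{n-i}{i}$ holds exactly, with no extra factor.

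Your Newton-polygon fallback does not close the gap as written. The substitution $x\mapsto 2X+1$ is not unimodular over $\mathbb{Z}_2$. Theorem~1.2 concerns multiplicativity of the Gauss norm, not its behaviour under substitution. So the claim that $\|P_n\|_2=\|\operatorname{lc}(P_n)\|_2$ ``up to the factor accounted for by Wahab's polygon'' is an assertion, not a derivation. The idea can be repaired as follows. Write $P_n(x)=\sum_j c_j\bigl((x-1)/2\bigr)^j$. Theorem~1.9 says the polygon ends at $(n,s_2(n))$ and all its slopes are at most $1$. Hence $v_2(c_j)\ge j-n+s_2(n)$, so every term has coefficients of valuation at least $s_2(n)-n=-v_2(n!)$. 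But you did not give this argument, and it leans on a deep cited result where the elementary digit-sum inequality suffices.
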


\begin{proof}
Rodrigues' formula gives
\begin{equation}
P_n(x)=2^{-n}\sum_{j=0}^m(-1)^j
\frac{(2n-2j)!}{j!(n-j)!(n-2j)!}x^{n-2j}.
\label{eq:2.2}
\end{equation}
The factorial quotient equals $\binom{2n-2j}{n-j}\binom{n-j}{j}$, so the denominator of every coefficient is a power of $2$. Denote the coefficient of $x^{n-2j}$ by $d_{n,j}$. By Theorem~1.3,
\begin{align*}
v_2(d_{n,j})
&=-n+v_2((2n-2j)!)-v_2(j!)-v_2((n-j)!)-v_2((n-2j)!)\\
&=-n+s_2(j)+s_2(n-2j).
\end{align*}
Here we used $s_2(2n-2j)=s_2(n-j)$. Carries in binary addition cannot increase the digit sum, so
\[
s_2(n)=s_2(2j+n-2j)\leq s_2(j)+s_2(n-2j).
\]
Since $v_2(\kappa_n)=v_2(n!)=n-s_2(n)$, it follows that $v_2(\kappa_nd_{n,j})\geq0$, and hence $M_n\in\mathbb{Z}[x]$. For $j=0$, this valuation is zero, so the leading coefficient is odd. Moreover, $M_n(1)=\kappa_n$ is a power of $2$. Any positive integer dividing all coefficients of $M_n$ divides both $\kappa_n$ and the odd leading coefficient, and therefore must equal $1$.
\end{proof}

\begin{lemma}
Every rational factor of $M_n$ is an even polynomial. If $n=q_1+\cdots+q_s$, where $q_1>\cdots>q_s$ are distinct powers of $2$, then the degree of every rational factor is a sum of some of the $q_i\geq2$; distinct factors in the same factorization select disjoint sets of these binary place values.
\end{lemma}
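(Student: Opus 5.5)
The plan is to read everything off Wahab's $2$-adic Newton polygon (Theorem~1.9), combined with the irreducibility criterion in Theorem~1.4, and then use uniqueness of binary expansions to force evenness.

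First, factor $P_n$ over $\mathbb{Q}_2$. By Theorem~1.9, the Newton polygon of $P_n(2X+1)$ has edges of horizontal lengths $q_1>\cdots>q_s$. Each edge has vertical change $1$, and the slopes are pairwise distinct. By Theorem~1.4, $P_n(2X+1)$, and hence $P_n(x)$ after the $\mathbb{Q}_2$-linear change of variable, factors over $\mathbb{Q}_2$ as $\prod_{i=1}^s F_i$. Here the $F_i$ are pairwise non-associate irreducibles with $\deg F_i=q_i$; they are non-associate because their roots have different valuations. By unique factorization in $\mathbb{Q}_2[x]$, any factor of $P_n$ in $\mathbb{Q}[x]\subset\mathbb{Q}_2[x]$ is, up to a scalar, $\prod_{i\in S}F_i$ for some subset $S$. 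Hence its degree is $\sum_{i\in S}q_i$. If several factors appear in one factorization, their subsets are disjoint, since each $F_i$ occurs only once.

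Next, remove the factor $x$ when $n$ is odd. In that case $x$ is a linear factor of $P_n$ over $\mathbb{Q}_2$, so it must equal (up to a scalar) the unique degree-one $F_i$. This forces $q_s=1$ and $F_s\sim x$. Therefore every rational factor of $M_n=\kappa_nP_n/x^\varepsilon$ corresponds to a subset of $\{i: q_i\ge 2\}$. This gives the degree and disjointness assertions, and in particular shows that every such degree is even.

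For evenness of the factors, it suffices to treat irreducible rational factors $f$ of $M_n$, since products of even polynomials are even. Since $M_n(-x)=M_n(x)$, the polynomial $f(-x)$ is also an irreducible factor of $M_n$ of the same degree $d$. Suppose $f(-x)$ were not a scalar multiple of $f(x)$. Then $f(x)$ and $f(-x)$ would be distinct factors in a common factorization of $M_n$. They would therefore select disjoint nonempty sets of distinct powers of $2$ with the same sum $d$. This contradicts uniqueness of binary expansions.

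So $f(-x)=cf(x)$, and comparing leading coefficients gives $c=\pm1$. If $c=-1$, then $f$ is odd and $f(0)=0$. But $M_n(0)\neq0$: for odd $n$ this holds because the roots of $P_n$ are simple (Theorem~1.5), and for even $n$ because the constant term in (2.2) with $j=m$ is nonzero. Hence $c=1$ and $f$ is even.

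The main obstacle is the bookkeeping that identifies $x$ with the length-one edge, which is needed so that the removed factor is exactly $F_s$. I handle this by the degree argument above rather than by computing the slope of that edge.
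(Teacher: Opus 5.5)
Your proof is correct, and it shares the paper's starting point: the $\mathbb{Q}_2$-factorization $P_n\sim\prod_i F_i$ with $\deg F_i=q_i$, read off from Theorems~1.9 and~1.4. Where you diverge is in how you prove evenness.

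\textbf{The paper's route.} The paper translates the slope $1/q_i$ back to the variable $x$, obtaining $v_2(1-x)=1-1/q_i$ for the roots in the $i$th group. For $q_i\ge2$ this forces $v_2(1+x)=v_2(1-x)$, so negation maps each slope group to itself and every local factor $F_i$ with $q_i\ge2$ is even. The zero root is matched to the length-one edge by checking that $x=0$ satisfies the same valuation condition.

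\textbf{Your route.} You use only degrees. You match $x$ to $F_s$ because it is the only linear local factor. Evenness of a rational irreducible factor $f$ then follows because otherwise $f(x)$ and $f(-x)$ would select disjoint nonempty sets of distinct powers of $2$ with the same sum.

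\textbf{Comparison.} Your argument is more economical, and the same degree trick even shows that each local $F_i$ with $q_i\ge2$ is even, since the $F_i$ have pairwise distinct degrees. Your final step using $M_n(0)\ne0$ is superfluous: $d$ is already even, so $c=(-1)^d=1$. What the paper's route buys is the explicit valuation data $v_2(1-x)=v_2(1+x)=1-1/q_i$ on each root group. This is not part of the lemma's statement, but it is reused in Proposition~5.2 to compute $A(1)=2^{2k-s_2(k)}$ and $v_2(1-t)=2-1/u$. With your proof, the formula $v_2(1-x)=1-1/q_i$ would have to be derived separately at that point.
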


\begin{proof}
By Theorem~1.9, the edges of the $2$-adic Newton polygon of $P_n(2X+1)$ have horizontal lengths $q_i$ and vertical increments $1$. By Theorem~1.4, each edge corresponds to an irreducible factor of degree $q_i$ over $\mathbb{Q}_2$, whose roots satisfy $v_2(X)=-1/q_i$. Returning to the variable $x=2X+1$, the corresponding roots satisfy
\begin{equation}
v_2(1-x)=1-\frac1{q_i}.
\label{eq:2.3}
\end{equation}
When $q_i\geq2$, the right-hand side is less than $v_2(2)=1$, and hence
\[
v_2(1+x)=v_2\bigl(2-(1-x)\bigr)=1-\frac1{q_i}.
\]
The root set of $P_n$ is invariant under negation, and this equality shows that negation preserves the root set corresponding to each edge. These nonzero roots occur in opposite pairs, so the corresponding monic irreducible factor is an even polynomial.

If $q_i=1$, then $n$ is odd. This group contains only one root, and $x=0$ satisfies~(2.3), so it is precisely the zero root removed in forming $M_n$. Thus, up to a nonzero constant, $M_n$ is the product of the even irreducible factors above in $\mathbb{Q}_2[x]$. Any rational factor must consist of some complete irreducible factors from this product, and is therefore still even. Distinct factors select disjoint sets of local irreducible factors, which also proves the assertion concerning degrees.
\end{proof}

\subsection{Construction of the resultant}

By Lemma~2.1 and the parity of $P_n$, the polynomial $M_n$ is an even integral polynomial of degree $2m$. Thus there is a unique $H\in\mathbb{Z}[t]$ such that $M_n(x)=H(x^2)$, with $\deg H=m$.

Now suppose that $M_n$ is reducible in $\mathbb{Q}[x]$. By Theorem~1.2 and Lemma~2.2, we may choose nonconstant primitive integral polynomials $F,G,A,B$ with positive leading coefficients such that
\begin{equation}
M_n(x)=F(x)G(x)=A(x^2)B(x^2),\qquad H(t)=A(t)B(t).
\label{eq:2.4}
\end{equation}
After interchanging the two factors if necessary, put
\[
k=\deg A\leq e=\deg B=m-k,\qquad
a=\operatorname{lc}(A)>0,\qquad b=\operatorname{lc}(B)>0.
\]
Then $1\leq k\leq m/2$, and $ab=\operatorname{lc}(M_n)$ implies that both $a$ and $b$ are odd. By Theorem~1.5, all roots of $H$ are simple and lie in $(0,1)$. Denote the roots of $A$ by $t_1,\ldots,t_k$ and those of $B$ by $u_1,\ldots,u_e$.

Define the resultant with respect to the variable $t$ by
\begin{equation}
R=\operatorname{Res}_t(A,B)
=a^eb^k\prod_{i=1}^k\prod_{j=1}^e(t_i-u_j)
=a^e\prod_{i=1}^k B(t_i).
\label{eq:2.5}
\end{equation}
It equals the determinant of the Sylvester matrix formed from the coefficients of $A$ and $B$, so $R\in\mathbb{Z}$. Since $H=AB$ has no repeated roots, the two root sets are disjoint and $R\neq0$. In the original variable $x$, the root-product formula for the resultant gives
\begin{equation}
\operatorname{Res}_x(F,G)
=a^{2e}\prod_{i=1}^k G(\sqrt{t_i})G(-\sqrt{t_i})
=a^{2e}\prod_{i=1}^k B(t_i)^2=R^2.
\label{eq:2.6}
\end{equation}
Thus valuations obtained from the resultant in the original variable must be divided by $2$ to obtain the valuations of $R$.

\subsection{The odd part of the resultant}

Define
\begin{equation}
\mathcal{R}=\frac{|R|}{2^{v_2(R)}}.
\label{eq:2.7}
\end{equation}
Since $R$ is a nonzero integer, $\mathcal{R}$ is a positive odd integer, and
\begin{equation}
\log\mathcal{R}=\sum_{\substack{p\ \mathrm{prime}\\p\ \mathrm{odd}}}v_p(R)\log p,
\label{eq:2.8}
\end{equation}
where $v_p(R)$ denotes the exponent of the prime $p$ in $R$. Sections~3 and~4 estimate $\log\mathcal{R}$ from above by summing the prime valuations on the right. Section~5 estimates $|R|$ using the real roots and subtracts the $2$-adic contribution to obtain a lower bound for $\log\mathcal{R}$. Finally, Section~6 compares the two bounds to rule out the factorization~(2.4).

\section{Two local upper bounds for the valuation of the resultant}

In this section we bound both the exponent of an odd prime in the resultant
and the range of odd primes that can divide it. Fix $n\ge64$ and retain the
assumed factorization from Section~2:
\[
 n=2m+\varepsilon,\qquad \varepsilon\in\{0,1\},\qquad
 M_n(x)=\kappa_nP_n(x)/x^\varepsilon=F(x)G(x)=A(x^2)B(x^2).
\]
Here $F,G,A,B$ are primitive polynomials with integer coefficients,
$k=\deg A\le e=\deg B=m-k$, and we write
\[
 a=\operatorname{lc}(A),\qquad b=\operatorname{lc}(B),\qquad
 R=\operatorname{Res}_t(A,B)\ne0.
\]
Thus $\deg F=2k$ and $\deg G=2e$, with leading coefficients $a$ and $b$,
respectively. Throughout this section $p$ is an odd prime, and $v_p$ denotes
the valuation normalized by $v_p(p)=1$, as well as its extension to
$\overline{\mathbb Q}_p$; we set $v_p(0)=+\infty$. For a finite extension
$K/\mathbb Q_p$, write
$\mathcal O_K=\{z\in K:v_p(z)\ge0\}$ for its ring of integers; an element
of valuation zero is called a unit. Put $\log_p y=\log y/\log p$.
The basic formulas for Legendre polynomials used below are listed in
Theorem~1.5.

\subsection{The first local upper bound}

We first estimate the valuation of the derivative at a root. For roots of
nonnegative valuation, we apply the Wronskian identity directly. For roots
of negative valuation, we pass to the reciprocal variable and use
orthogonality in a finite polynomial calculation. Expressing the
valuations of the derivative and the resultant as sums of nonnegative
root-difference valuations then gives the following result.

\begin{theorem}[First local upper bound]
For every odd prime $p$,
\begin{equation}
 v_p(R)\le k\lfloor\log_p(2n-1)\rfloor.
 \label{eq:3.1}
\end{equation}
\end{theorem}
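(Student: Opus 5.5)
The plan is to pass to the original variable $x$, where by (2.6) we have $2v_p(R)=v_p(\operatorname{Res}_x(F,G))$, and to bound the latter root by root. Write $\alpha_1,\dots,\alpha_{2k}$ for the roots of $F$ and $\gamma_1,\dots,\gamma_{2e}$ for those of $G$, all in $\overline{\mathbb Q}_p$. I would rewrite both $\operatorname{Res}_x(F,G)=a^{2e}\prod_{i}G(\alpha_i)$ and the derivative $M_n'(\alpha_i)=F'(\alpha_i)G(\alpha_i)$ in a normalized form. Since $F$ and $G$ are primitive, Theorem~1.2 gives $\|F\|_p=\|G\|_p=1$. Factoring each of $F,G$ locally as a unit times $\prod_{v(\alpha)\ge0}(x-\alpha)\prod_{v(\alpha)<0}(1-x/\alpha)$, the $p$-adic valuation of the resultant becomes $\sum_{i,j}w(\alpha_i,\gamma_j)$. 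Here $w(\alpha,\beta)$ is $v_p(\alpha-\beta)$ when both roots are integral, $v_p(1/\alpha-1/\beta)$ when both have negative valuation, and $0$ in the mixed case; in every case $w\ge0$. In the same normalization, $v_p$ of the suitably rescaled derivative $M_n'(\alpha_i)$ equals $\sum_{\beta\ne\alpha_i}w(\alpha_i,\beta)$, summed over all other roots of $M_n$, which dominates $\sum_j w(\alpha_i,\gamma_j)$ because all terms are nonnegative. It therefore suffices to prove, for each root $\alpha$ of $M_n$, that the normalized derivative has valuation at most $\lfloor\log_p(2n-1)\rfloor$. Summing over the $2k$ roots of $F$ then gives $2v_p(R)\le 2k\lfloor\log_p(2n-1)\rfloor$.

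For roots with $v_p(\alpha)\ge0$, I would use the identity $(1-\alpha^2)P_n'(\alpha)U_n(\alpha)=1$ quoted in the outline. By (2.2) the coefficients of each $P_j$ are $p$-integral for odd $p$, so $U_n$ has coefficients with $v_p\ge -v_p(\operatorname{lcm}(1,\dots,n))=-\lfloor\log_p n\rfloor$. Since $v_p(1-\alpha^2)\ge0$, this gives $v_p(P_n'(\alpha))\le\lfloor\log_p n\rfloor$. The factor $\kappa_n$ is a $p$-unit. When $n$ is odd, passing from $P_n'$ to $M_n'$ divides by $\alpha$ at a nonzero root, which only lowers the valuation. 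The leading coefficient of $M_n$ is a product of binomial coefficients, and the normalization absorbs it.

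For roots with $v_p(\alpha)<0$, I would pass to $y=1/x$ and work with the reciprocal polynomial $y^nP_n(1/y)$, whose relevant roots $\beta=1/\alpha$ have positive valuation. The goal is an identity of the form $S(y)\,\tilde P_n'(y)+T(y)\,\tilde P_n(y)=c$, with $S,T$ having $p$-integral coefficients and $v_p(c)\le\lfloor\log_p(2n-1)\rfloor$. I would build it from orthogonality (1.6): for $\deg q<n$, the vanishing of $\int_{-1}^1 qP_n\,dx$ gives linear relations on the coefficients of $P_n$. The moments $\int_{-1}^1 x^j\,dx=2/(j+1)$ appear for $j$ up to about $2n-2$, since the relevant products such as $P_nP_n'$ have degree $2n-1$. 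This is the source of the denominators $\operatorname{lcm}(1,\dots,2n-1)$ and of the exponent $\lfloor\log_p(2n-1)\rfloor$. Concretely, I would integrate $x^{j}P_n(x)P_n'(x)$, or the Wronskian-type expression, against low powers and expand everything in the reciprocal variable to extract a nonvanishing $p$-adically controlled constant at $y=\beta$.

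I expect this negative-valuation case to be the main obstacle. The Wronskian identity cannot be used directly there, because $1-\alpha^2$ and $U_n(\alpha)$ both have large negative valuation. The delicate part is producing the finite polynomial identity from orthogonality whose only non-integral data are the moments $2/(j+1)$ with $j+1\le 2n-1$, so that the loss is exactly $\lfloor\log_p(2n-1)\rfloor$.
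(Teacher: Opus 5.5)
Your reduction is correct and matches the paper. Your $w(\alpha,\beta)$ is the $\delta_p$ of Lemma~3.4, and summing a per-root bound over the $2k$ roots of $F$ is exactly the paper's proof of Theorem~3.1. Your treatment of roots with $v_p(\alpha)\ge0$ via $(1-\alpha^2)P_n'(\alpha)U_n(\alpha)=1$ is the first half of Lemma~3.3; working with $M_n$ instead of $P_n$ is harmless, since $\kappa_n$ is a $p$-unit.

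The gap is the case $v_p(\alpha)<0$, which is the only place where $\lfloor\log_p(2n-1)\rfloor$, rather than $\lfloor\log_p n\rfloor$, enters. There you only describe the shape of a Bezout identity $S\widehat P_n'+T\widehat P_n=c$ that would suffice. You neither construct it nor give a mechanism that would produce it; ``integrate $x^jP_nP_n'$ against low powers'' does not determine one. So the bound is not established for these roots.

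Your diagnosis of the obstacle is also mistaken. The Wronskian identity \emph{is} what the paper uses in this case. What you are missing is that $U_n(\alpha)$ is not $p$-adically large. By (3.6), at a root one has $U_n(\alpha)=\frac12\int_{-1}^1P_n(t)\,dt/(t-\alpha)$, a second-kind quantity, and orthogonality kills the first $n$ terms of its expansion in $t/\alpha$.

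The paper makes this $p$-adic as follows. Put $\beta=\alpha^{-1}$, $\rho=v_p(\beta)>0$ and $L_p=\lfloor\log_p(2n-1)\rfloor$. Let $I$ be the linear functional with $I(t^j)=1/(j+1)$ for even $j$ and $I(t^j)=0$ for odd $j$.
\begin{itemize}
\item The polynomial $q(t)=P_n(t)/(1-\beta t)$ has coefficients $b_j\in\mathcal O_K$.
\item Identity (3.6) gives $\widehat U_n(\beta)=-\beta^nI(q)$, where $\widehat U_n(Y)=Y^{n-1}U_n(1/Y)$.
\item The finite geometric sum $q=P_n\sum_{j<n}(\beta t)^j+\beta^nt^nq$, together with orthogonality, gives $\widehat U_n(\beta)=-\beta^{2n}I(t^nq)$.
\item Here $I(t^nq)=\sum_{n+j\ \mathrm{even}}b_j/(n+j+1)$. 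Parity forces $n+j\le2n-2$, so every denominator is at most $2n-1$.
\end{itemize}
Hence $v_p(\widehat U_n(\beta))\ge2n\rho-L_p$. The reciprocal form $(1-\beta^2)\widehat P_n'(\beta)\widehat U_n(\beta)=\beta^{2n-1}$ of (3.5) then yields $v_p(\widehat P_n'(\beta))\le L_p-\rho$.

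Each $b_j$ is a polynomial in $\beta$ with $\mathbb Z_p$-coefficients. So running this computation with $\beta$ as a variable does produce an identity of your shape with $c=p^{L_p}$. That construction is precisely what your proposal leaves out.
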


\begin{lemma}[Wronskian identity]
Define
\begin{equation}
 U_n(x)=\sum_{j=1}^n\frac{P_{j-1}(x)P_{n-j}(x)}{j},
 \qquad h_n=\sum_{j=1}^n\frac1j.
 \label{eq:3.2}
\end{equation}
Here $U_n$ is the polynomial part $W_{n-1}$ of the classical Legendre
function of the second kind; see \cite[Eq.~14.7.4]{dlmf} for this expression.
Then $\deg U_n\le n-1$, and
\begin{equation}
 |U_n(x)|\le h_n\qquad(-1\le x\le1),
 \label{eq:3.3}
\end{equation}
\begin{equation}
 (1-x^2)\bigl(P_n'(x)U_n(x)-P_n(x)U_n'(x)\bigr)=1-P_n(x)^2.
 \label{eq:3.4}
\end{equation}
In particular, if $P_n(\alpha)=0$, then
\begin{equation}
 (1-\alpha^2)P_n'(\alpha)U_n(\alpha)=1.
 \label{eq:3.5}
\end{equation}
Moreover,
\begin{equation}
 U_n(x)=\frac12\int_{-1}^1\frac{P_n(x)-P_n(t)}{x-t}\,dt.
 \label{eq:3.6}
\end{equation}
The integrand is a polynomial in $x$ and $t$, so no singular integral is
involved.
\end{lemma}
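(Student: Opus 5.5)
We prove the lemma by taking the integral (3.6) as the basic definition. We derive the Wronskian identity from the second-kind Legendre function, and only at the end identify (3.6) with the finite sum (3.2). Put
\[
V_n(x)=\frac12\int_{-1}^1\frac{P_n(x)-P_n(t)}{x-t}\,dt .
\]
Since $(P_n(x)-P_n(t))/(x-t)$ is a polynomial in $x,t$ of total degree $n-1$, $V_n$ is a polynomial in $x$ of degree $\le n-1$. For $x\in\mathbb{C}\setminus[-1,1]$ define
\[
Q_n(x)=\frac12\int_{-1}^1\frac{P_n(t)}{x-t}\,dt=P_n(x)Q_0(x)-V_n(x),\qquad Q_0(x)=\frac12\log\frac{x+1}{x-1}.
\]

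\textbf{Step 1: $Q_n$ solves the Legendre equation.} Applying the operator $L=(1-x^2)\partial_x^2-2x\partial_x+n(n+1)$ under the integral sign, the self-adjoint form $L y=((1-t^2)y')'+n(n+1)y$ lets us integrate by parts in $t$. Equivalently, we verify the recurrence (1.3) for $Q_n$ and $V_n$: the term $\tfrac12\int_{-1}^1 P_{n}(t)\,dt$ vanishes for $n\ge1$, so the $Q_n$ satisfy the same three-term recurrence. Combined with (1.4)-type relations this gives $LQ_n=0$. Expanding $1/(x-t)=\sum_r t^r x^{-r-1}$ and using orthogonality (1.6) gives the asymptotics $Q_n(x)=c_nx^{-n-1}(1+O(x^{-2}))$, with $c_n=\tfrac12\int_{-1}^1t^nP_n(t)\,dt>0$.

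\textbf{Step 2: the Wronskian.} Let $W=P_nQ_n'-P_n'Q_n$. Abel's identity for (1.2) gives $(1-x^2)W=C$, a constant. Let $\ell_n$ be the leading coefficient of $P_n$. As $x\to\infty$,
\[
P_n\sim \ell_nx^n,\qquad Q_n\sim c_nx^{-n-1},\qquad W\sim -(2n+1)\ell_nc_nx^{-2} .
\]
Hence $C=(2n+1)\ell_nc_n$. By Rodrigues' formula and the standard integral $\int_{-1}^1 t^nP_n\,dt=2^{n+1}(n!)^2/(2n+1)!$, this constant equals $1$. This normalization is the one computation I expect to require care. A cleaner alternative is to check $C=1$ for $n=0$, where $W=Q_0'=1/(1-x^2)$, and propagate it with the recurrence (1.3). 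Indeed, $(n+1)W_{n+1,n}=nW_{n,n-1}$-type relations for the mixed Wronskians show inductively that $(1-x^2)(P_nQ_n'-P_n'Q_n)=1$.

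\textbf{Step 3: the polynomial identity.} Substitute $Q_n=P_nQ_0-V_n$ and $Q_0'=1/(1-x^2)$:
\[
P_nQ_n'-P_n'Q_n=\frac{P_n^2}{1-x^2}-P_nV_n'+P_n'V_n .
\]
Setting this equal to $1/(1-x^2)$ and clearing denominators gives
\[
(1-x^2)(P_n'V_n-P_nV_n')=1-P_n^2 .
\]
Both sides are polynomials that agree on an open set, so this holds identically. Putting $x=\alpha$ with $P_n(\alpha)=0$ gives (3.5).

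\textbf{Step 4: identification $V_n=U_n$, and the bounds.} We compare generating functions using (1.5). With $g(x,z)=(1-2xz+z^2)^{-1/2}$, the sum formula (3.2) has generating function
\[
\sum_n U_nz^n=g(x,z)\int_0^z g(x,w)\,dw ,
\]
because $\int_0^z g(x,w)\,dw=\sum_{j\ge1}P_{j-1}(x)z^j/j$. On the other side,
\[
\sum_n V_nz^n=\frac12\int_{-1}^1\frac{g(x,z)-g(t,z)}{x-t}\,dt .
\]
Evaluating this elementary integral in $t$ shows the two series coincide as formal power series. Alternatively, both families satisfy the recurrence $(n+1)Y_{n+1}=(2n+1)xY_n-nY_{n-1}+(\text{same inhomogeneity})$ with equal initial values $Y_0=0$, $Y_1=1$. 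We could also simply cite \cite[Eq.~14.7.4]{dlmf}, as the statement does. Once $V_n=U_n$, the bound $\deg U_n\le n-1$ is clear termwise. The estimate (3.3) follows from (1.7) applied to each factor $P_{j-1}P_{n-j}$, giving $|U_n(x)|\le\sum_{j=1}^n 1/j=h_n$ on $[-1,1]$.
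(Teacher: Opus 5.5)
Your proposal is correct and has the same skeleton as the paper's proof. Both write $Q_n=P_nQ_0-V_n$ for $x>1$, use $P_nQ_n'-P_n'Q_n=1/(1-x^2)$, substitute, clear denominators, and pass from $x>1$ to a polynomial identity, reading off the degree bound and (3.3) termwise from (3.2). The difference is that the paper cites DLMF both for $Q_n=P_nL-U_n$ (which, together with $L(x)=\frac12\int_{-1}^1dt/(x-t)$, is exactly (3.6)) and for the Wronskian. You instead derive both from Theorem~1.5. That makes the lemma self-contained and removes the need to check that the conventions of the several cited DLMF formulas agree, at the cost of length.

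Your derivations check out:
\begin{itemize}
\item In Step~1, write $\mathcal L_x$ for your operator acting in $x$. The integration by parts works because $\mathcal L_x(x-t)^{-1}=\mathcal L_t(x-t)^{-1}$: the difference is $2(t^2-x^2)(x-t)^{-3}+2(x+t)(x-t)^{-2}=0$. The boundary terms carry the factor $1-t^2$, so they vanish.
\item In Step~4, put $r=\sqrt{1-2xz+z^2}$. Both generating functions equal $r^{-1}\log\frac{1+z+r}{1-z+r}$, because $(z-x+r)(1-z+r)=(1-x)(1+z+r)$.
\end{itemize}

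Two simplifications are available. First, the normalization you flagged needs no asymptotics. If you carry an unknown constant $C$ through Step~3, you get the polynomial identity $(1-x^2)(P_n'V_n-P_nV_n')=C-P_n^2$. Evaluating at $x=1$ forces $C=P_n(1)^2=1$. Second, your fallback arguments are only asserted, not proved:
\begin{itemize}
\item obtaining $\mathcal L_xQ_n=0$ from the recurrence plus ``(1.4)-type relations'';
\item the mixed-Wronskian induction;
\item the claim that the sum (3.2) satisfies the three-term recurrence.
\end{itemize}
None of them is needed, so drop them rather than rely on them.
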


\begin{proof}
The degree bound and the estimate on the real interval follow directly
from the definition and the bound $|P_j(x)|\le1$ in Theorem~1.5.
For real $x>1$, put
\[
 L(x)=\frac12\log\frac{x+1}{x-1}.
\]
Use the Legendre function of the second kind $Q_n$ determined by
\[
 Q_n(x)=\frac12\int_{-1}^1\frac{P_n(t)}{x-t}\,dt.
\]
This integral formula is given in
\cite[Eqs.~14.12.13 and 14.7.6]{dlmf}. With the same normalization,
\cite[Eqs.~14.7.4, 14.7.7, and 14.2.10]{dlmf} gives
\[
 Q_n=P_nL-U_n,\qquad
 P_nQ_n'-P_n'Q_n=\frac1{1-x^2}.
\]
Since $L'(x)=1/(1-x^2)$, substitution yields
\[
 \frac1{1-x^2}=\frac{P_n(x)^2}{1-x^2}
       +P_n'(x)U_n(x)-P_n(x)U_n'(x).
\]
Multiplication by $1-x^2$ gives (3.4). Also,
\[
 L(x)=\frac12\int_{-1}^1\frac{dt}{x-t},
\]
so $U_n=P_nL-Q_n$ gives (3.6). Both sides of each of these two identities
are polynomials with rational coefficients; their validity for $x>1$
therefore proves the corresponding polynomial identities. Finally,
substituting $P_n(\alpha)=0$ into (3.4) gives (3.5).
\end{proof}

\begin{lemma}[Valuation bounds for the derivative at a root]
Let $p$ be an odd prime, put $L_p=\lfloor\log_p(2n-1)\rfloor$, and define
the reciprocal polynomial $\widehat P_n(Y)=Y^nP_n(1/Y)$.
If $P_n(\alpha)=0$, then
\begin{equation}
 v_p(\alpha)\ge0\quad\Longrightarrow\quad
 v_p\bigl(P_n'(\alpha)\bigr)\le\lfloor\log_p n\rfloor\le L_p,
 \label{eq:3.7}
\end{equation}
\begin{equation}
 v_p(\alpha)<0\quad\Longrightarrow\quad
 v_p\bigl(\widehat P_n'(\alpha^{-1})\bigr)
 \le L_p-v_p(\alpha^{-1})<L_p.
 \label{eq:3.8}
\end{equation}
\end{lemma}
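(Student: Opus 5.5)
The plan is to treat the two cases separately. In each case we combine the Wronskian identity (3.5) with control of the denominators of $U_n$, or of its reciprocal, at odd primes. Throughout, recall from (2.2) that every $P_j$ has coefficients whose denominators are powers of $2$. Hence the coefficients of $P_j$, and of its reciprocal polynomial, are $p$-integral for odd $p$.

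\emph{Case $v_p(\alpha)\ge0$.} By (3.5), $v_p(P_n'(\alpha))=-v_p(1-\alpha^2)-v_p(U_n(\alpha))$. Since $\alpha$ is $p$-integral, $v_p(1-\alpha^2)\ge0$. In the definition (3.2), each summand $P_{j-1}P_{n-j}/j$ has $p$-integral coefficients divided by $j\le n$. Therefore every coefficient of $U_n$ has valuation at least $-\max_{j\le n}v_p(j)=-\lfloor\log_p n\rfloor$. Evaluating at the integral element $\alpha$ gives $v_p(U_n(\alpha))\ge-\lfloor\log_p n\rfloor$. This yields (3.7), and $\lfloor\log_p n\rfloor\le L_p$ is trivial.

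\emph{Case $v_p(\alpha)<0$.} Put $\beta=\alpha^{-1}$, so $v_p(\beta)>0$ and $\widehat P_n(\beta)=0$. Define $\widehat U(Y)=Y^{n-1}U_n(1/Y)$. Differentiating $\widehat P_n(Y)=Y^nP_n(1/Y)$ and using $\widehat P_n(\beta)=0$ gives $P_n'(\alpha)=-\beta^{2-n}\widehat P_n'(\beta)$. Also $U_n(\alpha)=\beta^{1-n}\widehat U(\beta)$ and $1-\alpha^2=-\beta^{-2}(1-\beta^2)$. Substituting into (3.5) gives
\[
(1-\beta^2)\,\widehat P_n'(\beta)\,\widehat U(\beta)=\beta^{2n-1}.
\]
Since $v_p(1-\beta^2)=0$, we get $v_p(\widehat P_n'(\beta))=(2n-1)v_p(\beta)-v_p(\widehat U(\beta))$. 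So it suffices to prove
\[
v_p(\widehat U(\beta))\ge 2n\,v_p(\beta)-L_p .
\]
This gives $v_p(\widehat P_n'(\beta))\le L_p-v_p(\beta)<L_p$, which is (3.8).

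To bound $\widehat U(\beta)$ I will use orthogonality as a finite polynomial identity. Expanding $1/(x-t)$ in powers of $1/x$ in $Q_n(x)=\tfrac12\int_{-1}^1P_n(t)\,dt/(x-t)$, and applying (1.6), gives $Q_n(x)=O(x^{-n-1})$. Also $L(1/Y)=\sum_{i\ge0}Y^{2i+1}/(2i+1)$. Multiply $U_n=P_nL-Q_n$ by $Y^{n}$ at $x=1/Y$ and let $S(Y)=\sum_{i=0}^{n-1}Y^{2i+1}/(2i+1)$ be the truncation of $L(1/Y)$. Then the exact polynomial identity
\[
\widehat P_n(Y)S(Y)=Y\widehat U(Y)+Y^{2n}T(Y)
\]
holds, where $T$ collects the terms of degree $\ge 2n$ of the left side; discarded terms of $L$ only affect degrees $\ge 2n+1$. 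The coefficients of $T$ are $p$-integral combinations of $1/(2i+1)$ with $2i+1\le2n-1$, so they have valuation $\ge-L_p$. Evaluating at $\beta$ and using $\widehat P_n(\beta)=0$ gives $\beta\widehat U(\beta)=-\beta^{2n}T(\beta)$. Since $\beta$ is integral, this yields $v_p(\widehat U(\beta))\ge(2n-1)v_p(\beta)-L_p$.

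This is short by one factor of $\beta$. It is the main obstacle, and must be fixed by careful bookkeeping of the degree shift. I would first recheck whether $Y^{n}$ rather than $Y^{n-1}$ is the correct normalization, which amounts to confirming that $\widehat U$ matches the $Y$-adic expansion to order $Y^{2n}$. The identity itself follows from $Y^{n-1}Q_n(1/Y)=O(Y^{2n})$. If a factor is genuinely lost, I would instead use the parity of $P_n$: the bad terms have a fixed parity, so $T$ effectively starts at $Y^{2n+1}$ relative to $Y\widehat U$. That recovers the required extra $v_p(\beta)$.
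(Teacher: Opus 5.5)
Your proof is correct. The factor of $\beta$ you fear losing is not actually lost, so replace the hedged last paragraph with a definite statement. Your normalization $\widehat U(Y)=Y^{n-1}U_n(1/Y)$ is the right one; it is the paper's. What you missed is that $T(0)=0$.

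Both error terms in $\widehat P_nS-Y\widehat U=-\widehat P_n\,(L(1/Y)-S)+Y^nQ_n(1/Y)$ start at $Y^{2n+1}$. The discarded tail of $L(1/Y)$ starts there, and so does $Y^nQ_n(1/Y)=\tfrac12\sum_{j\ge n}Y^{n+j+1}\int_{-1}^1t^jP_n(t)\,dt$. Hence
\[
\widehat P_n(Y)S(Y)=Y\widehat U(Y)+Y^{2n+1}T_1(Y).
\]
Parity forces this too: $\widehat P_n$ is even and $S$ is odd, so $\widehat P_nS$ has no $Y^{2n}$ term. For $n=2$ the difference is $-Y^5/6$. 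The coefficients of $T_1$ are coefficients of $\widehat P_nS$, so they still have valuation at least $-L_p$. Then $\beta\widehat U(\beta)=-\beta^{2n+1}T_1(\beta)$ gives exactly $v_p(\widehat U(\beta))\ge 2n\,v_p(\beta)-L_p$.

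In the second case your route genuinely differs from the paper's.

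\emph{The paper's route.} It never expands at infinity. It evaluates the integral formula (3.6) at $x=1/\beta$ and divides $P_n(t)$ by $1-\beta t$ to get $q\in\mathcal O_K[t]$. It then applies the functional $I$ to the geometric-sum identity $q=P_n\sum_{j<n}(\beta t)^j+\beta^nt^nq$. Orthogonality yields $\widehat U_n(\beta)=-\beta^{2n}I(t^nq)$, with denominators at most $2n-1$ because $I$ kills odd powers.

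\emph{Your route.} You prove one identity in $\mathbb Q[Y]$, independent of the root. It is the classical fact that $U_n/P_n$ is the Pad\'e approximant of $L$ at infinity. Only afterwards do you specialize at $\beta$. The bound $2n-1$ comes from truncating the odd series $L$ below degree $2n+1$.

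\emph{What each buys.} Your version is uniform in $\beta$ and $p$, and it shows transparently where the exponent $2n$ comes from. The paper's is a finite computation in $K[t]$ that needs no expansion of $Q_n$. Yours can be made purely formal as well: for $d>n$, the coefficient of $Y^d$ in $\widehat P_n(Y)L(1/Y)$ equals $\tfrac12\int_{-1}^1t^{d-n-1}P_n(t)\,dt$, which vanishes for $d\le 2n$.

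In the first case you use only $v_p(1-\alpha^2)\ge0$, and that suffices for (3.7). The paper's proof that $1-\alpha^2$ is a unit is more than this inequality needs.
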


\begin{proof}
First suppose that $v_p(\alpha)\ge0$. Then $v_p(1-\alpha^2)=0$.
Otherwise $v_p(\alpha-\eta)>0$ for some $\eta\in\{1,-1\}$.
Since $P_n\in\mathbb Z_p[x]$, the polynomial $P_n(x)-P_n(\eta)$ is
divisible by $x-\eta$, with quotient in $\mathbb Z_p[x]$. Hence
$v_p(P_n(\alpha)-P_n(\eta))>0$, contrary to $P_n(\alpha)=0$ and
$P_n(\eta)=\eta^n$. From $P_j\in\mathbb Z_p[x]$ and (3.2), we obtain
\[
 v_p\bigl(U_n(\alpha)\bigr)
 \ge-\max_{1\le j\le n}v_p(j)=-\lfloor\log_p n\rfloor.
\]
Taking valuations in (3.5) proves (3.7).

Now suppose that $v_p(\alpha)<0$, and write
\[
 \beta=\alpha^{-1},\qquad \rho=v_p(\beta)>0,\qquad
 \widehat U_n(Y)=Y^{n-1}U_n(1/Y).
\]
Since $P_n(\alpha)=0$,
\[
 \widehat P_n'(\beta)=-\beta^{n-2}P_n'(\alpha),\qquad
 \widehat U_n(\beta)=\beta^{n-1}U_n(\alpha).
\]
Thus (3.5) becomes
\begin{equation}
 (1-\beta^2)\widehat P_n'(\beta)\widehat U_n(\beta)=\beta^{2n-1}.
 \label{eq:3.9}
\end{equation}
Since $v_p(1-\beta^2)=0$, it suffices to prove that
\begin{equation}
 v_p\bigl(\widehat U_n(\beta)\bigr)\ge2n\rho-L_p.
 \label{eq:3.10}
\end{equation}

Choose a finite extension $K/\mathbb Q_p$ containing $\beta$.
As $P_n(1/\beta)=0$,
\begin{equation}
 q(t)=\frac{P_n(t)}{1-\beta t}
 \label{eq:3.11}
\end{equation}
is a polynomial of degree $n-1$ in $K[t]$. Its coefficients all lie in
$\mathcal O_K$. Indeed, write $P_n(t)=\sum_{j=0}^n c_jt^j$ and
$q(t)=\sum_{j=0}^{n-1}b_jt^j$. Comparing coefficients in
$P_n=(1-\beta t)q$ gives
\[
 b_0=c_0,\qquad b_j=c_j+\beta b_{j-1}\quad(1\le j\le n-1).
\]
Since $c_j\in\mathbb Z_p$ and $\beta\in\mathcal O_K$, induction gives
$b_j\in\mathcal O_K$.

Define a $K$-linear map $I:K[t]\longrightarrow K$ by
\begin{equation}
 I(t^j)=\begin{cases}
 0,&j\text{ odd},\\
 1/(j+1),&j\text{ even}.
 \end{cases}
 \label{eq:3.12}
\end{equation}
For polynomials with rational coefficients,
$I(f)=\frac12\int_{-1}^1f(t)\,dt$.
Orthogonality therefore gives $I(t^jP_n(t))=0$ for $0\le j<n$.
Evaluating the polynomial identity (3.6) at $x=1/\beta$ gives
\begin{equation}
 \widehat U_n(\beta)=-\beta^nI(q).
 \label{eq:3.13}
\end{equation}
On the other hand, the finite geometric sum gives
\[
 q(t)=P_n(t)\sum_{j=0}^{n-1}(\beta t)^j+\beta^nt^nq(t).
\]
Apply $I$ to both sides. Every term in the first summand vanishes by
orthogonality, so
\[
 I(q)=\beta^nI(t^nq),\qquad
 \widehat U_n(\beta)=-\beta^{2n}I(t^nq).
\]
Moreover,
\[
 I(t^nq)=\sum_{\substack{0\le j\le n-1\\n+j\text{ even}}}
                  \frac{b_j}{n+j+1}.
\]
Here $b_j\in\mathcal O_K$, and the denominator of each nonzero term is a
positive integer at most $2n-1$. Indeed, $n+j$ must be even, so
$n+j\le2n-2$; the potential denominator $2n$ corresponds to an odd power,
which is annihilated by $I$. Consequently,
$v_p(I(t^nq))\ge-L_p$, proving (3.10).
Finally, taking valuations in (3.9) yields
\[
 v_p\bigl(\widehat P_n'(\beta)\bigr)
 =(2n-1)\rho-v_p\bigl(\widehat U_n(\beta)\bigr)\le L_p-\rho,
\]
which is (3.8).
\end{proof}

\begin{lemma}[Nonnegative root-difference decompositions of the resultant and derivative]
Let $Z_n,Z_F,Z_G$ be the sets of roots of $P_n,F,G$, respectively, in
$\overline{\mathbb Q}_p$. For distinct roots $\alpha,\gamma\in Z_n$, define
\begin{equation}
 \delta_p(\alpha,\gamma)=v_p(\alpha-\gamma)
       -\min\{v_p(\alpha),0\}-\min\{v_p(\gamma),0\}.
 \label{eq:3.14}
\end{equation}
Then $\delta_p(\alpha,\gamma)\ge0$, and
\begin{equation}
 2v_p(R)=\sum_{\alpha\in Z_F}\sum_{\gamma\in Z_G}\delta_p(\alpha,\gamma).
 \label{eq:3.15}
\end{equation}
For every $\alpha\in Z_n$,
\begin{equation}
 \sum_{\substack{\gamma\in Z_n\\\gamma\ne\alpha}}
       \delta_p(\alpha,\gamma)
 =\begin{cases}
 v_p\bigl(P_n'(\alpha)\bigr),&v_p(\alpha)\ge0,\\
 v_p\bigl(\widehat P_n'(\alpha^{-1})\bigr),&v_p(\alpha)<0.
 \end{cases}
 \label{eq:3.16}
\end{equation}
In addition, for $Q=F,G$,
\begin{equation}
 v_p(\operatorname{lc}Q)=-\sum_{Q(\gamma)=0}\min\{v_p(\gamma),0\},
 \label{eq:3.17}
\end{equation}
\begin{equation}
 v_p\bigl(Q(0)\bigr)=\sum_{Q(\gamma)=0}\max\{v_p(\gamma),0\}.
 \label{eq:3.18}
\end{equation}
\end{lemma}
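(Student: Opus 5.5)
The plan is to reduce every assertion to the elementary identity, valid for nonzero $\alpha,\gamma\in\overline{\mathbb Q}_p$,
\[
v_p(\alpha-\gamma)-\min\{v_p(\alpha),0\}-\min\{v_p(\gamma),0\}
=v_p\bigl(\alpha^{*}-\gamma^{*}\bigr)\quad\text{in suitable coordinates,}
\]
and then to apply the root-product formulas. For nonnegativity we split into three cases. If both valuations are $\ge0$, then $\delta_p=v_p(\alpha-\gamma)\ge0$ by the ultrametric inequality. If $v_p(\alpha)<0\le v_p(\gamma)$, then $v_p(\alpha-\gamma)=v_p(\alpha)$, so $\delta_p=-v_p(\gamma)\cdot0+0=0$. (More precisely, $\delta_p=v_p(\alpha)-v_p(\alpha)=0$.) If both valuations are negative, we write $\alpha-\gamma=\alpha\gamma(\gamma^{-1}-\alpha^{-1})$, which gives $\delta_p=v_p(\alpha^{-1}-\gamma^{-1})\ge0$. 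Note that $0$ is not a root of $F$ or $G$, since $M_n(0)\ne0$; it may be a root of $P_n$, but then it falls under the first case. This case analysis shows that $\delta_p$ is a root difference measured in the chart $x$ or $1/x$ adapted to each point.

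For (3.17) and (3.18) we factor $Q=\operatorname{lc}(Q)\prod(x-\gamma)$. Since $Q$ is primitive and integral, $\|Q\|_p=1$. By Theorem~1.2 (Gauss norm multiplicativity) we get $\|x-\gamma\|_p=\max(1,|\gamma|_p)$. Hence
\[
v_p(\operatorname{lc}Q)+\sum\min\{v_p(\gamma),0\}=0,
\]
which is (3.17). Taking valuations of $Q(0)=\pm\operatorname{lc}(Q)\prod\gamma$ and using $v=\min(v,0)+\max(v,0)$ then gives (3.18).

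For (3.15) we use (2.6), which gives $2v_p(R)=v_p(\operatorname{Res}_x(F,G))$. We expand
\[
\operatorname{Res}_x(F,G)=\operatorname{lc}(F)^{2e}\operatorname{lc}(G)^{2k}\prod_{\alpha\in Z_F}\prod_{\gamma\in Z_G}(\alpha-\gamma).
\]
By (3.17), $v_p(\operatorname{lc}F^{2e})=-\sum_{\alpha}\sum_{\gamma}\min\{v_p(\alpha),0\}$, with each $\alpha$ counted $|Z_G|=2e$ times. The analogous statement holds for $G$. Summing these contributions yields exactly $\sum\sum\delta_p$.

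For (3.16) with $v_p(\alpha)\ge0$, we use $P_n'(\alpha)=\operatorname{lc}(P_n)\prod_{\gamma\ne\alpha}(\alpha-\gamma)$. Here $\delta_p(\alpha,\gamma)=v_p(\alpha-\gamma)-\min\{v_p(\gamma),0\}$. We need $v_p(\operatorname{lc}P_n)=-\sum_{\gamma\in Z_n}\min\{v_p(\gamma),0\}$. This follows from the Gauss-norm argument provided $\|P_n\|_p=1$. That holds because $M_n$ is primitive, $\kappa_n$ is a $p$-adic unit for odd $p$, and the factor $x^\varepsilon$ has norm $1$. The $\gamma=\alpha$ term contributes nothing since $\min\{v_p(\alpha),0\}=0$.

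For $v_p(\alpha)<0$, we use the reciprocal polynomial $\widehat P_n$. Its roots are $\gamma^{-1}$ for $\gamma\ne0$, and it has $\operatorname{lc}(\widehat P_n)=P_n(0)$ when $n$ is even. For odd $n$, $\widehat P_n$ has degree $n-1$, so we handle $\gamma=0$ by noting that $\delta_p(\alpha,0)=0$, consistent with the degree drop. Writing $\beta=\alpha^{-1}$, we have $\widehat P_n'(\beta)=\operatorname{lc}(\widehat P_n)\prod_{\gamma\ne\alpha,0}(\beta-\gamma^{-1})$. We then check termwise that $v_p(\beta-\gamma^{-1})$ plus the appropriate share of $v_p(\operatorname{lc}\widehat P_n)$ equals $\delta_p(\alpha,\gamma)$. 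By (3.18) applied to $P_n$ (again using $\|P_n\|_p=1$), the leading-coefficient share is $\sum\max\{v_p(\gamma),0\}$ over nonzero roots. For $v_p(\gamma)\ge0$ the term is $v_p(\beta-\gamma^{-1})+v_p(\gamma)=v_p(\gamma\beta-1)=0=\delta_p$. For $v_p(\gamma)<0$ it is $v_p(\beta-\gamma^{-1})=\delta_p$ by the case computation above.

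The main obstacle is this last bookkeeping: we must handle the zero root for odd $n$ and make sure the leading coefficient of $\widehat P_n$ is accounted for exactly once per root.
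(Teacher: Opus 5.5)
Your proof is correct. The nonnegativity case split, the Gauss-norm derivation of (3.17)--(3.18), the resultant bookkeeping for (3.15), and the case $v_p(\alpha)\ge0$ of (3.16) are essentially the same as in the paper. Your justification of $\|P_n\|_p=1$ via the primitivity of $M_n$ differs only cosmetically from the paper's, which uses $P_n\in\mathbb Z_p[x]$ together with $P_n(1)=1$.

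The one real difference is the case $v_p(\alpha)<0$ of (3.16). You factor $\widehat P_n$ and match its leading coefficient and root differences against $\delta_p$ term by term. This is correct, but it forces the zero-root and $\operatorname{lc}(\widehat P_n)$ bookkeeping you flagged as the main obstacle. One small precision point: for odd $n$, (3.18) must be applied to $P_n/x$ rather than to $P_n$ itself, since $P_n(0)=0$ and $\operatorname{lc}(\widehat P_n)$ is then the constant term of $P_n/x$.

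The paper avoids this bookkeeping entirely. Run the computation you did for $v_p(\alpha)\ge0$ without assuming the sign of $v_p(\alpha)$. For every $\alpha\in Z_n$ it gives $\sum_{\gamma\ne\alpha}\delta_p(\alpha,\gamma)=v_p(P_n'(\alpha))-(n-2)\min\{v_p(\alpha),0\}$. Differentiating $\widehat P_n(Y)=Y^nP_n(1/Y)$ at $Y=\alpha^{-1}$, where $P_n(\alpha)=0$, gives $\widehat P_n'(\alpha^{-1})=-\alpha^{2-n}P_n'(\alpha)$, whose valuation is exactly that right-hand side.

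Your version makes explicit the ``chart'' interpretation: each term of the sum is either a root difference of $\widehat P_n$ or zero. The paper's version is shorter and treats both parities of $n$ uniformly without ever factoring $\widehat P_n$.
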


\begin{proof}
According to the signs of the two root valuations, the definition becomes
\begin{equation}
 \delta_p(\alpha,\gamma)=\begin{cases}
 v_p(\alpha-\gamma),&v_p(\alpha),v_p(\gamma)\ge0,\\
 v_p(\alpha^{-1}-\gamma^{-1}),&v_p(\alpha),v_p(\gamma)<0,\\
 0,&\text{one valuation is nonnegative and the other is negative}.
 \end{cases}
 \label{eq:3.19}
\end{equation}
The second line uses
$\alpha^{-1}-\gamma^{-1}=(\gamma-\alpha)/(\alpha\gamma)$.
The third uses the fact that the valuation of the difference of two
elements of unequal valuations is the smaller of those valuations.
Each right-hand side is nonnegative.

Choose a finite extension $K/\mathbb Q_p$ containing all the roots.
For $Q=P_n,F,G$, the Gauss norm is $1$: the coefficients of $P_n$ lie in
$\mathbb Z_p$ and $P_n(1)=1$, while $F$ and $G$ are primitive integer
polynomials. Factor $Q$ into linear factors and apply Theorem~1.2 to obtain
\[
 1=\|Q\|_p=p^{-v_p(\operatorname{lc}Q)}
        \prod_{Q(\gamma)=0}\max\{1,p^{-v_p(\gamma)}\}.
\]
Taking logarithms gives
\begin{equation}
 v_p(\operatorname{lc}Q)=-\sum_{Q(\gamma)=0}\min\{v_p(\gamma),0\}.
 \label{eq:3.20}
\end{equation}
This proves (3.17). Since neither $F$ nor $G$ has a zero root, the
root-product formula for the constant term further gives
\[
 v_p\bigl(Q(0)\bigr)=v_p(\operatorname{lc}Q)+\sum_{Q(\gamma)=0}v_p(\gamma)
 =\sum_{Q(\gamma)=0}\max\{v_p(\gamma),0\},
\]
proving (3.18).

The root-product formula for the resultant gives
\begin{align*}
 v_p\bigl(\operatorname{Res}_x(F,G)\bigr)
 &=2e\,v_p(a)+2k\,v_p(b)
       +\sum_{\alpha\in Z_F}\sum_{\gamma\in Z_G}v_p(\alpha-\gamma)\\
 &=\sum_{\alpha\in Z_F}\sum_{\gamma\in Z_G}
   \bigl(v_p(\alpha-\gamma)-\min\{v_p(\alpha),0\}-\min\{v_p(\gamma),0\}\bigr).
\end{align*}
The second equality uses (3.20). Together with
$\operatorname{Res}_x(F,G)=R^2$ from Section~2, this proves (3.15).

Finally, all roots of $P_n$ are simple, so
\[
 v_p\bigl(P_n'(\alpha)\bigr)=v_p(\operatorname{lc}P_n)
       +\sum_{\substack{\gamma\in Z_n\\\gamma\ne\alpha}}v_p(\alpha-\gamma).
\]
Substituting (3.20) for the valuation of the leading coefficient and
rearranging gives
\[
 \sum_{\substack{\gamma\in Z_n\\\gamma\ne\alpha}}\delta_p(\alpha,\gamma)
 =v_p\bigl(P_n'(\alpha)\bigr)-(n-2)\min\{v_p(\alpha),0\}.
\]
If $v_p(\alpha)\ge0$, this is the first case of (3.16).
If $v_p(\alpha)<0$, use
\[
 \widehat P_n'(\alpha^{-1})=-\alpha^{2-n}P_n'(\alpha)
\]
to identify the right-hand side with
$v_p(\widehat P_n'(\alpha^{-1}))$, proving the second case.
\end{proof}

\par\medskip\noindent\textbf{Proof of Theorem 3.1.}
Fix $\alpha\in Z_F$. Since $Z_G\subset Z_n\setminus\{\alpha\}$ and
every $\delta_p$ in Lemma~3.4 is nonnegative, Lemma~3.3 gives
\[
 \sum_{\gamma\in Z_G}\delta_p(\alpha,\gamma)
 \le\sum_{\substack{\gamma\in Z_n\\\gamma\ne\alpha}}\delta_p(\alpha,\gamma)
 \le\lfloor\log_p(2n-1)\rfloor.
\]
Summing over the $2k$ roots of $F$ and using (3.15), we obtain
\[
 2v_p(R)\le2k\lfloor\log_p(2n-1)\rfloor.
\]
Dividing by $2$ proves (3.1).\hfill$\square$

\subsection{The second local upper bound}

We prove that an odd prime $p>m=\lfloor n/2\rfloor$ cannot divide the
resultant $R$.

\begin{theorem}[Second local upper bound]
For every odd prime $p>m$,
\begin{equation}
 v_p(R)=0.
 \label{eq:3.21}
\end{equation}
\end{theorem}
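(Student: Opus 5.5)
The plan is to show that when $p>m$, no root of $F$ can be $p$-adically close to a root of $G$. Then every term $\delta_p(\alpha,\gamma)$ in (3.15) with $\alpha\in Z_F$, $\gamma\in Z_G$ vanishes, and $v_p(R)=0$. By (3.19), $\delta_p(\alpha,\gamma)>0$ happens only in two cases: both roots are integral and $\alpha\equiv\gamma$ modulo the maximal ideal, or both are non-integral. So it suffices to understand the reduction $\bar P_n\in\mathbb F_p[x]$, which is nonzero because $P_n(1)=1$, together with its degree.

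\emph{Case $p>n$.} Here $\lfloor\log_pn\rfloor=0$, and (3.7) gives $v_p(P_n'(\alpha))=0$ for every integral root. It remains to see that there are no non-integral roots. The leading coefficient of $P_n$ is $2^{-n}\binom{2n}{n}$, and $v_p\binom{2n}{n}=0$ for $p>n$, so it is a $p$-adic unit. Then (3.16) makes every $\delta_p(\alpha,\gamma)$ equal to zero, and the claim follows immediately.

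\emph{Case $m<p\le n$, so $p\le n\le 2m+1<2p+1$.} Write $n=p+n_0$ with $0\le n_0<p$. I would first establish the Lucas-type congruence $P_{n_0+p}(x)\equiv P_{n_0}(x)P_1(x^p)=x^pP_{n_0}(x)\pmod p$. The most convenient route is the generating function (1.5), using $(1-2xz+z^2)^{-p/2}\equiv(1-2x^pz^p+z^{2p})^{-1/2}$ modulo $p$ in $\mathbb Z_p[[x,z]]$.

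With this congruence in hand:
\begin{enumerate}
\item $\bar P_n$ has full degree $n$, because $\operatorname{lc}(P_{n_0})=2^{-n_0}\binom{2n_0}{n_0}$ is a $p$-adic unit when $n_0<p$. Hence there are no non-integral roots.
\item The integral roots reduce either to roots of $\bar P_{n_0}$ or to $0$, the latter with multiplicity $p$.
\item $\bar P_{n_0}$ is squarefree and coprime to $x$. For squarefreeness I would use the differentiated Legendre equation
\[
(1-x^2)y^{(r+2)}-2(r+1)xy^{(r+1)}+(n_0-r)(n_0+r+1)y^{(r)}=0 .
\]
A double root $\bar\alpha\ne\pm1$ would force all derivatives $y^{(r)}(\bar\alpha)$, $r\le n_0<p$, to vanish, and so would force $\bar P_{n_0}\equiv0$. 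The values $\pm1$ are excluded since $P_{n_0}(\pm1)=\pm1$. Also $\bar P_{n_0}(0)\ne0$, because $\bar P_n=x^p\bar P_{n_0}$ and the cluster at $0$ is analysed separately below.
\end{enumerate}
Consequently every pair of distinct roots with $\delta_p>0$ consists of two roots lying in the cluster of $p$ roots reducing to $0$, where $p$ counts the possible root $x=0$ removed when $n$ is odd.

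The remaining, and main, step is to show that this whole cluster lies inside a single factor, $F$ or $G$. I would work with $H(t)$ and its $p$-adic Newton polygon. The cluster corresponds to the roots of $H$ with $v_p(t)>0$. There are $(p-\varepsilon)/2$ of them, and $\bar H=t^{(p-\varepsilon)/2}\cdot(\text{unit at }0)$.

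I would compute $v_p(H(0))$ from (2.2). For even $n=2m$ the constant coefficient is, up to powers of $2$, $\binom{2m}{m}$. For odd $n$ it is $(n+1)\binom{n+1}{m+1}/2$-type expressions. In both cases Theorem~1.3, together with $m<p\le n<2p$, should give $v_p(H(0))=1$.

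Then the Newton polygon segment coming from the positive-valuation roots has vertical change exactly $1$. By Theorem~1.4, that cluster forms a single irreducible factor over $\mathbb Q_p$, so it cannot be split between $A$ and $B$. Passing back to the variable $x$, the roots $\pm\sqrt{t}$ stay in the same factor. Hence no pair $(\alpha,\gamma)\in Z_F\times Z_G$ has $\delta_p>0$, and (3.15) gives $v_p(R)=0$.

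I expect the main obstacle to be checking $v_p(H(0))=1$ uniformly in the parities and in the boundary cases $p=n$ and $n_0$ small. If it fails somewhere, I would fall back on a direct argument that the cluster's Newton polygon edge has vertical change $1$, computed from the low-degree coefficients of $H$.
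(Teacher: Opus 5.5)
There is a genuine gap. Both of your cases rest on the claim that $P_n$ has no roots of negative $p$-adic valuation, and this is false. Since $2n<p^2$, one has $v_p\binom{2n}{n}=\lfloor 2n/p\rfloor-2\lfloor n/p\rfloor$. This equals $1$ whenever $n<p<2n$, so your assertion $v_p\binom{2n}{n}=0$ for $p>n$ fails. In your second case, $\operatorname{lc}(P_{n_0})=2^{-n_0}\binom{2n_0}{n_0}$ is divisible by $p$ as soon as $2n_0\ge p$ (for instance $n=2p-1$, $n_0=p-1$), so $\bar P_n$ need not have degree $n$.

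A concrete example is $n=64$, $p=67$, which lies inside the paper's range since $p>m=32$. By (2.2), the coefficients of $x^{64},x^{62},\dots,x^{4}$ are all divisible by $67$, while those of $x^2$ and $x^0$ are units. So $\bar P_{64}$ has degree $2$, and $62$ roots have valuation $-1/62$. For any two of them, $\delta_p(\alpha,\gamma)=v_p(\alpha^{-1}-\gamma^{-1})\ge 1/62>0$. If $F$ and $G$ split this cluster, (3.15) would give $v_p(R)>0$. Neither (3.7)/(3.16) nor your reduction-mod-$p$ analysis controls such pairs, because both roots are non-integral.

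The repair mirrors what you do at $0$, and it is exactly the leading-coefficient half of the paper's Lemma~3.7. The formula above gives $v_p(\operatorname{lc}M_n)\le 1$ for every $p>m$. By (3.17), a primitive factor with a root of negative valuation has integer leading coefficient divisible by $p$, so at most one of $F,G$ can contain such roots. Equivalently, the right-hand edge of the Newton polygon of $H$ has vertical change at most $1$, so these roots form a single $\mathbb{Q}_p$-irreducible factor. Mixed pairs (one root of negative valuation, one of nonnegative valuation) have $\delta_p=0$ by (3.19), so with this addition your argument closes.

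Two smaller points. First, you only need $v_p(M_n(0))\le 1$, not $=1$. The paper's counting argument via (3.18) (a factor with a root of positive valuation has $p\mid Q(0)$) gets this without any Newton polygon. Second, $\bar P_{n_0}(0)=0$ when $n_0$ is odd, that is, when $n$ is even. In that case the cluster at $0$ has $p+1$ roots, and $H$ has $(p+1)/2$ roots of positive valuation rather than $(p-\varepsilon)/2$. This miscount does not affect the argument.
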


The proof has two steps. We first treat roots of valuation zero, and then
use the constant term and leading coefficient to treat roots of positive
and negative valuation.

\begin{lemma}
Let $p>m$ be an odd prime. If
\[
 P_n(\alpha)=0,\qquad v_p(\alpha)=0,
\]
then $v_p(P_n'(\alpha))=0$.
\end{lemma}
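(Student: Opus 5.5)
The plan is to split according to whether $p>n$ or $m<p\le n$. In both cases I first note, exactly as in the proof of Lemma~3.3, that $v_p(\alpha)\ge0$ forces $v_p(1-\alpha^2)=0$.

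\emph{Case $p>n$.} The definition (3.2) shows that $U_n$ has coefficients in $\mathbb Z_p$, since every denominator $j\le n$ is prime to $p$. Hence $v_p(U_n(\alpha))\ge0$. Taking valuations in (3.5) gives $v_p(P_n'(\alpha))\le0$. The reverse inequality holds because $P_n\in\mathbb Z_p[x]$ and $v_p(\alpha)\ge0$, so $v_p(P_n'(\alpha))=0$.

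\emph{Case $m<p\le n$.} Here I would argue modulo $p$ rather than through $U_n$, because the term with $j=p$ in $U_n$ has a genuine denominator $p$. Write $n=p+r$ with $0\le r=n-p<p$.

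Step 1: a congruence for $P_n$. Working in $\mathbb Z_p[x][[z]]$, the generating function (1.5) factors as
\[
(1-2xz+z^2)^{-1/2}=(1-2xz+z^2)^{(p-1)/2}(1-2xz+z^2)^{-p/2}.
\]
Modulo $p$ the second factor is congruent to $(1-2x^pz^p+z^{2p})^{-1/2}=\sum_iP_i(x^p)z^{pi}$. The first factor is a polynomial in $z$ of degree $p-1$; call its $z^s$-coefficient $c_s(x)$, which has degree at most $s$. Comparing coefficients of $z^n$ with $n=p+r$ and $r<p$, only $i=1$ contributes. This gives
\[
P_n(x)\equiv x^pc_r(x)\pmod p.
\]
Moreover $c_r\not\equiv0$, since $c_r(1)$ is the $z^r$-coefficient of $(1-z)^{p-1}$, namely $(-1)^r\binom{p-1}{r}\not\equiv0\pmod p$.

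Step 2: a differential equation for $c_r$. Over $\mathbb F_p$ we have $(x^pc)'=x^pc'$. Reducing (1.2) modulo $p$ therefore yields
\[
x^p\bigl[(1-x^2)\bar c_r''-2x\bar c_r'+n(n+1)\bar c_r\bigr]=0,
\]
so $\bar c_r$ satisfies the Legendre-type equation over $\mathbb F_p$. Differentiating $j$ times gives
\[
(1-x^2)y^{(j+2)}-2(j+1)xy^{(j+1)}+\bigl(n(n+1)-j(j+1)\bigr)y^{(j)}=0.
\]

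Step 3: the contradiction. Suppose $v_p(P_n'(\alpha))>0$. Let $\bar\alpha\ne0$ be the reduction of $\alpha$ in the residue field. Then $\bar\alpha^p\bar c_r(\bar\alpha)=0$ and $\bar\alpha^p\bar c_r'(\bar\alpha)=0$, so $\bar c_r(\bar\alpha)=\bar c_r'(\bar\alpha)=0$. Since $1-\bar\alpha^2\ne0$, the differentiated equations give inductively $\bar c_r^{(j)}(\bar\alpha)=0$ for all $j\le r$. Because $\deg\bar c_r\le r<p$, the Taylor expansion of $\bar c_r$ about $\bar\alpha$ is valid in characteristic $p$. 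Hence $\bar c_r=0$, contradicting Step 1.

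The main obstacle is Step 1: the congruence $P_n\equiv x^pc_r$ and the nonvanishing of $c_r$. The crucial point is the choice of $c_r$ rather than $P_r$, since the two agree modulo $p$ only under a restriction on $r$ that need not hold here. The remaining steps are routine once one checks that the degree stays below $p$, which is what legitimizes the Taylor argument.
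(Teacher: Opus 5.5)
Your proof is correct, and it takes a somewhat different route from the paper. The paper treats $p>n$ and $m<p\le n$ uniformly by writing $n=\lambda p+r$. It derives $\overline P_{p+r}=x^p\overline P_r$ from $\overline P_p=x^p$ (via the coefficient formula) and the three-term recurrence. It then uses the reduced differential equation to show that $\overline P_r$ has no multiple roots, so $\gcd(\overline P_n,\overline P_n')=x^{\lambda p}$. Finally it lifts a B\'ezout identity $uP_n+vP_n'=x^{\lambda p}+pw$ and evaluates it at $\alpha$.

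Your version differs at each stage:
\begin{itemize}
\item You dispose of $p>n$ through the Wronskian identity; this is just the case $\lfloor\log_p n\rfloor=0$ of (3.7).
\item You obtain the congruence in one stroke from the generating function and Frobenius.
\item You finish inside the residue field: the iterated reduced equation forces every derivative of $\bar c_r$ to vanish at $\bar\alpha$, and Taylor expansion in degree $<p$ then forces $\bar c_r=0$.
\end{itemize}

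Your ending is more economical. It needs only $\bar c_r\neq0$ (checked at the single point $x=1$) rather than squarefreeness of $\overline P_r$, and it avoids the Euclidean-algorithm lift. Your generating-function computation also gives $\overline P_{\lambda p+r}=\overline P_\lambda(x^p)\bar c_r$ for any $\lambda$ with no extra work. The paper's route, by contrast, stays within the recurrence and needs no case split. Your Case~1 is in fact unnecessary too: when $n<p$, the same residue-field argument applies directly to $\overline P_n$, which has degree $<p$ and satisfies $\overline P_n(1)=1$.

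One side remark in your proposal is wrong, though it does not affect the logic. You claim that $c_r$ and $P_r$ agree modulo $p$ only under some restriction on $r$. In fact $c_r\equiv P_r\pmod p$ for every $0\le r<p$. This follows from your own identity in Step~1: compare coefficients of $z^r$ with $r<p$, where only the $i=0$ term contributes. So choosing $c_r$ over $P_r$ is a convenience, not the crucial point, and your congruence is exactly the paper's $\overline P_{p+r}=x^p\overline P_r$.
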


\begin{proof}
For a polynomial $f$ with coefficients in $\mathbb Z_p$, let
$\overline f\in\mathbb F_p[x]$ denote its coefficientwise reduction
modulo $p$. This notation applies to Legendre polynomials because the
denominators of their coefficients are powers of $2$.

Since $p>m$, we have $n<2p$. Write
\[
 n=\lambda p+r,\qquad \lambda\in\{0,1\},\qquad 0\le r<p.
\]
We first prove that
\begin{equation}
 \overline P_n(x)=x^{\lambda p}\overline P_r(x).
 \label{eq:3.22}
\end{equation}
By the coefficient formula (2.2),
\[
 P_p(x)=2^{-p}\sum_{j=0}^{(p-1)/2}(-1)^j
             \binom pj\binom{2p-2j}{p}x^{p-2j}.
\]
For $j\ge1$, the factor $\binom pj$ is divisible by $p$, while the
leading coefficient satisfies
\[
 \binom{2p}{p}=2\prod_{j=1}^{p-1}\frac{p+j}{j}\equiv2\pmod p,
 \qquad 2^p\equiv2\pmod p.
\]
Thus $\overline P_p=x^p$. The three-term recurrence (1.3) first gives
$\overline P_{p+1}=x^{p+1}$ and then gives
\[
 (j+1)\overline P_{p+j+1}
 =(2j+1)x\overline P_{p+j}-j\overline P_{p+j-1},
 \qquad 1\le j\le p-2.
\]
Since $j+1$ is invertible in $\mathbb F_p$, comparison with the
recurrence for $\overline P_j$ proves by induction that
\[
 \overline P_{p+j}=x^p\overline P_j\qquad(0\le j<p).
\]
This proves (3.22).

Next, we show that $\overline P_r$ has no multiple roots. First,
\[
 \overline P_r(1)=1,\qquad \overline P_r(-1)=(-1)^r,
\]
so neither $1$ nor $-1$ is a root. If another root
$\xi\in\overline{\mathbb F}_p$ has multiplicity $s\ge2$, write
\[
 \overline P_r(x)=(x-\xi)^sV(x),\qquad V(\xi)\ne0.
\]
Here $2\le s\le r<p$. Substitute into the reduction of the differential
equation (1.2) in $\mathbb F_p[x]$:
\[
 (1-x^2)\overline P_r''-2x\overline P_r'+r(r+1)\overline P_r=0.
\]
The coefficient of $(x-\xi)^{s-2}$ on the left is
\[
 (1-\xi^2)s(s-1)V(\xi)\ne0,
\]
a contradiction.

Therefore, (3.22) and $(x^{\lambda p})'=0$ in characteristic $p$ give
\[
 \gcd(\overline P_n,\overline P_n')=x^{\lambda p}.
\]
Apply the polynomial Euclidean algorithm and lift the resulting
coefficients to representatives in $\mathbb Z_p$. This gives
$u,v,w\in\mathbb Z_p[x]$ such that
\begin{equation}
 uP_n+vP_n'=x^{\lambda p}+pw.
 \label{eq:3.23}
\end{equation}
Evaluation at the root $\alpha$ yields
\[
 v(\alpha)P_n'(\alpha)=\alpha^{\lambda p}+pw(\alpha).
\]
Since $v_p(\alpha)=0$, the right-hand side has valuation zero. Both
factors on the left have nonnegative valuation, so
$v_p(P_n'(\alpha))=0$.
\end{proof}

\begin{lemma}
Let $p>m$ be an odd prime, and let $c$ be the leading coefficient of
$M_n$. Then
\begin{equation}
 v_p(c)\le1,\qquad v_p\bigl(M_n(0)\bigr)\le1.
 \label{eq:3.24}
\end{equation}
Consequently, $F$ and $G$ cannot both have a root of positive valuation,
nor can they both have a root of negative valuation.
\end{lemma}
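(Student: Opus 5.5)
The plan is to compute both quantities explicitly from the coefficient formula (2.2) and bound their $p$-adic valuations using Theorem~1.3. The consequence will then follow from (3.17) and (3.18).

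\emph{Leading coefficient.} Taking $j=0$ in (2.2), the leading coefficient is
\[
c=\kappa_n2^{-n}\binom{2n}{n}.
\]
Since $p$ is odd, $v_p(c)=v_p\bigl(\binom{2n}{n}\bigr)$. In this section $n\ge64$, so $p>m\ge32$, and hence $2n\le4m+2<4p<p^2$. By the last assertion of Theorem~1.3,
\[
v_p\Bigl(\binom{2n}{n}\Bigr)=\Bigl\lfloor\frac{2n}{p}\Bigr\rfloor-2\Bigl\lfloor\frac{n}{p}\Bigr\rfloor .
\]
For any real $y$ we have $\lfloor 2y\rfloor-2\lfloor y\rfloor\in\{0,1\}$, so $v_p(c)\le1$.

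\emph{Constant term.} Here I use the term $j=m$ in (2.2).
\begin{itemize}
\item If $n=2m$, this term is $(-1)^m2^{-n}\binom{2m}{m}$. Since $p>m$ forces $2m<2p$, Theorem~1.3 gives $v_p(\binom{2m}{m})=\lfloor 2m/p\rfloor-0\le1$.
\item If $n=2m+1$, then $M_n(0)=\kappa_n\cdot(\text{coefficient of }x)$. Up to a sign and a power of $2$, this coefficient is $(2m+1)\binom{2m}{m}$, since $\frac{(2m+2)!}{m!\,(m+1)!\,1!}=(2m+1)\binom{2m}{m}\cdot 2$. There are two cases. If $p\mid 2m+1$, then $p>m$ forces $p=2m+1$; then $2m<p$, so $p\nmid\binom{2m}{m}$ and the valuation is exactly $1$. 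Otherwise $v_p(2m+1)=0$ and $v_p(\binom{2m}{m})\le1$ as before.
\end{itemize}
This proves (3.24). The only mild care needed is confirming $2n<p^2$, which is where the standing hypothesis $n\ge64$ enters.

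\emph{Consequence.} Since $M_n=FG$, we have $v_p(c)=v_p(\operatorname{lc}F)+v_p(\operatorname{lc}G)$, and both summands are nonnegative integers because $F$ and $G$ are integral. By (3.17), $v_p(\operatorname{lc}Q)$ is positive exactly when $Q$ has a root of negative valuation. Therefore, if both $F$ and $G$ had such a root, each summand would be at least $1$, giving $v_p(c)\ge2$, which contradicts (3.24).

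The argument for roots of positive valuation is identical. It uses $M_n(0)=F(0)G(0)$, the fact that $F(0)$ and $G(0)$ are nonzero integers, and (3.18), which shows that $v_p(Q(0))\ge1$ whenever $Q$ has a root of positive valuation.
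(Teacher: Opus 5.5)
Your proof is correct and takes essentially the same route as the paper: you read off the $j=0$ and $j=m$ coefficients from (2.2), apply Legendre's formula using $2n<p^2$, and then combine (3.17)--(3.18) with the integrality of $\operatorname{lc}Q$ and $Q(0)$ to get the consequence. The only cosmetic difference is in the odd case, where you factor the coefficient as $2(2m+1)\binom{2m}{m}$ and split on whether $p=2m+1$, whereas the paper writes the valuation as $\lfloor(2m+2)/p\rfloor-\lfloor(m+1)/p\rfloor$ and splits on whether $p=m+1$.
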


\begin{proof}
Since $n\ge64$ and $p>m$,
\[
 2n\le4m+2<(m+1)^2\le p^2.
\]
By the coefficient formula (2.2) for $M_n$ and Theorem~1.3,
\[
 v_p(c)=\left\lfloor\frac{2n}{p}\right\rfloor
          -2\left\lfloor\frac n p\right\rfloor\in\{0,1\}.
\]
Multiplication by $\kappa_n$ does not change valuations at odd primes.
For the constant term,
\[
 v_p\bigl(M_n(0)\bigr)=\begin{cases}
 \displaystyle\left\lfloor\frac{2m}{p}\right\rfloor,&n=2m,\\[6pt]
 \displaystyle\left\lfloor\frac{2m+2}{p}\right\rfloor
             -\left\lfloor\frac{m+1}{p}\right\rfloor,&n=2m+1.
 \end{cases}
\]
The first case is clearly at most $1$. In the second case, if $p>m+1$,
the two floors are at most $1$ and equal to $0$, respectively; if
$p=m+1$, their difference is $2-1=1$. Thus the valuation of the constant
term is also at most $1$.

We now explain how these bounds constrain the roots. For $Q=F$ or $G$,
the Gauss-norm formula in the proof of Lemma~3.4 gives
\[
 v_p(\operatorname{lc}Q)=-\sum_{Q(\zeta)=0}\min\{v_p(\zeta),0\}.
\]
Combining this with the root-product formula for the constant term gives
\[
 v_p\bigl(Q(0)\bigr)=\sum_{Q(\zeta)=0}\max\{v_p(\zeta),0\}.
\]
Hence a root of positive valuation forces $v_p(Q(0))>0$. Since $Q(0)$
is a nonzero integer, its valuation is then at least $1$. If both $F$
and $G$ had such a root, we would obtain
\[
 v_p\bigl(M_n(0)\bigr)=v_p\bigl(F(0)\bigr)+v_p\bigl(G(0)\bigr)\ge2,
\]
contrary to the bound above.

Similarly, if $Q$ has a root of negative valuation, its leading
coefficient has valuation at least $1$. If both $F$ and $G$ had such a
root, then
\[
 v_p(c)=v_p(\operatorname{lc}F)+v_p(\operatorname{lc}G)\ge2,
\]
again a contradiction.
\end{proof}

\par\medskip\noindent\textbf{Proof of Theorem 3.5.}
Take any root $\alpha$ of $F$ and any root $\gamma$ of $G$.
Retain the notation of Lemma~3.4:
\[
 \delta_p(\alpha,\gamma)=v_p(\alpha-\gamma)
       -\min\{v_p(\alpha),0\}-\min\{v_p(\gamma),0\}.
\]
That lemma shows that $\delta_p(\alpha,\gamma)\ge0$. We prove that
equality must hold.

If one root has valuation zero, assume without loss of generality that
$v_p(\alpha)=0$. By Lemmas~3.6 and 3.4,
\[
 0\le\delta_p(\alpha,\gamma)
 \le\sum_{\substack{P_n(\zeta)=0\\\zeta\ne\alpha}}
                  \delta_p(\alpha,\zeta)
 =v_p\bigl(P_n'(\alpha)\bigr)=0.
\]
If both root valuations are nonzero, Lemma~3.7 rules out their being
both positive or both negative. They must therefore have opposite signs.
Assume without loss of generality that
\[
 v_p(\alpha)>0,\qquad v_p(\gamma)<0.
\]
Then $v_p(\alpha-\gamma)=v_p(\gamma)$, and substitution into the
definition again gives $\delta_p(\alpha,\gamma)=0$.

Thus every pair of roots belonging to $F$ and $G$, respectively,
satisfies $\delta_p(\alpha,\gamma)=0$. Finally, (3.15) gives
\[
 2v_p(R)=\sum_{\alpha\in Z_F}\sum_{\gamma\in Z_G}\delta_p(\alpha,\gamma)=0.
\]
This proves the theorem.\hfill$\square$

\section{A combined upper bound for the odd part of the resultant}

In this section we combine the two local upper bounds from Section~3. The first controls the exponent of each odd prime, while the second restricts the odd primes that can occur to $p\le m$. We place all the required prime powers in two least common multiples and then apply Theorem~1.7.

Throughout this section, we continue to assume that $n\ge64$, $m=\lfloor n/2\rfloor$, $k=\deg A\le\deg B=m-k$, $R=\operatorname{Res}_t(A,B)$, and $\mathcal R=|R|/2^{v_2(R)}$. Here $v_p$ denotes the normalized prime valuation, and $\log$ denotes the natural logarithm.

\begin{theorem}[Combined upper bound]
Under the nontrivial factorization assumption of Section~2,
\begin{equation}
\frac{\log\mathcal R}{k}
<\bigl(m+\sqrt{2n-1}\bigr)\log3.
\label{eq:4.1}
\end{equation}
\end{theorem}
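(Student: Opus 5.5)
We combine Theorems~3.1 and~3.5 prime by prime in the factorization~(2.8), put the resulting prime powers into two least common multiples, and then apply Theorem~1.7.

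By Theorem~3.5 only odd primes $p\le m$ contribute. By Theorem~3.1 each such prime satisfies $v_p(R)\le kL_p$, where $L_p=\lfloor\log_p(2n-1)\rfloor$. Hence
\[
\frac{\log\mathcal R}{k}\le\sum_{\substack{3\le p\le m}}L_p\log p .
\]

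Next we split the exponent as $L_p=1+(L_p-1)$.

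\emph{The term $1$.} It contributes $\sum_{p\le m}\log p=\vartheta(m)$, the logarithm of the product of the odd primes up to $m$. This product divides $\operatorname{lcm}(1,\ldots,m)$.

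\emph{The term $L_p-1$.} This is positive only when $p^2\le 2n-1$, that is, when $p\le q:=\lfloor\sqrt{2n-1}\rfloor$. For such $p$ we have $p^{L_p-1}\le (2n-1)/p$. To place these prime powers inside $\operatorname{lcm}(1,\ldots,q)$, I will check that
\[
L_p-1\le\lfloor\log_p q\rfloor .
\]
Since $q^2\le 2n-1<(q+1)^2$, we have $\log_p(2n-1)<2\log_p(q+1)$. So it suffices to show that $p^{j}\le q$ whenever $p^{j+1}\le 2n-1$.

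This inequality is the main technical point. It can fail by an integer-rounding effect: for instance $p^{j+1}\le 2n-1$ may hold while $p^j>q$ when $j\ge 1$ is large. If it does fail, I would instead bound the extra part directly:
\[
\sum_{p\le q}(L_p-1)\log p\le\sum_{p\le q}\lfloor\log_p q\rfloor\log p+\sum_{p\le q}\log p,
\]
or, more simply, use
\[
\sum_p (L_p-1)\log p=\sum_{j\ge2}\vartheta\bigl((2n-1)^{1/j}\bigr)=\psi(2n-1)-\vartheta(2n-1).
\]
The first term $\vartheta(\sqrt{2n-1})$ is below $\sqrt{2n-1}\log 3$ by~(1.8). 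The remaining terms $\vartheta((2n-1)^{1/j})$ for $j\ge3$ would then need to be absorbed into the slack coming from $\vartheta(m)$ versus $m\log3$, which may require care.

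The cleanest route, which I would try first, is to compare $L_p$ with the exponent of $p$ in $\operatorname{lcm}(1,\ldots,m)\operatorname{lcm}(1,\ldots,q)$, namely $\lfloor\log_p m\rfloor+\lfloor\log_p q\rfloor$. Because $2n-1\le 4m+1$ and $m\ge 32$, we should have $p^{L_p}\le 4m+1$. When $p^{a}\le m<p^{a+1}$ and $p^{b}\le q<p^{b+1}$, we get $p^{a+b+2}>mq\ge m\sqrt{2n-1}/2\gg 4m+1$, so $L_p\le a+b+1$. I need $L_p\le a+b$. This holds when $b\ge1$ and $p^{a+b+1}>4m+1$, which follows from $p^{a+1}>m$ and $p^b\ge p\ge3$ via $p^{a+b+1}\ge 3p^{a+1}>3m$. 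That is not quite $4m+1$, so the borderline case needs a short separate check; for $p=3$ use $p^{b}\ge 9$ once $q\ge 9$. For $p>q$ we have $L_p=1\le a$.

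Granting this, $\mathcal R^{1/k}\le\operatorname{lcm}(1,\ldots,m)\operatorname{lcm}(1,\ldots,q)<3^{m+q}$ by Theorem~1.7. Taking logarithms and using $q\le\sqrt{2n-1}$ gives~(4.1). The expected main obstacle is this exponent comparison at small primes, particularly $p=3$ near powers of $3$.
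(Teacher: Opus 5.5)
Your final route is correct and is essentially the paper's proof: both show prime by prime that $\prod_{3\le p\le m}p^{\lfloor\log_p(2n-1)\rfloor}$ divides $\operatorname{lcm}(1,\ldots,m)\operatorname{lcm}(1,\ldots,q)$, treating $p=3$ as the only delicate case, and then apply Theorem~1.7. The ``borderline check'' you left open closes at once: for $p\ge5$ with $b\ge1$ you have $p^{a+b+1}\ge5p^{a+1}>5m\ge4m+1$, and for $p=3$ the hypothesis $n\ge64$ forces $q\ge\lfloor\sqrt{127}\rfloor=11$, hence $b\ge2$ and $3^{a+b+1}>9m$; so you can drop the other sketched routes, including the inequality $L_p-1\le\lfloor\log_p q\rfloor$, which does fail (for example at $p=3$, $n=122$).
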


\begin{proof}
Set $N=2n-1$, $q=\lfloor\sqrt N\rfloor$, and
\begin{equation}
D_n=\prod_{\substack{p\le m\\p\text{ odd prime}}}
p^{\lfloor\log_pN\rfloor}.
\label{eq:4.2}
\end{equation}
Theorems~3.1 and~3.5 imply
\begin{equation}
\mathcal R\mid D_n^k,\qquad
\frac{\log\mathcal R}{k}\le\log D_n.
\label{eq:4.3}
\end{equation}
For a positive integer $r$, write $\Lambda(r)=\operatorname{lcm}(1,\ldots,r)$. We claim that
\begin{equation}
D_n\mid\Lambda(m)\Lambda(q).
\label{eq:4.4}
\end{equation}

Fix an odd prime $p\le m$, and put $r=\lfloor\log_pN\rfloor$. If $r=1$, the required factor $p$ already divides $\Lambda(m)$. If $p\ge5$ and $r\ge2$, then
\[
p^{r-1}\le\frac Np\le\frac{4m+1}{5}\le m.
\]
Thus $\Lambda(m)$ contains $p^{r-1}$. Moreover, $p^2\le N$ gives $p\le q$, so $\Lambda(q)$ supplies another factor $p$.

It remains to consider $p=3$. Since $m\ge32$, the case $r\le3$ is already covered by $\Lambda(m)$. If $r\ge4$, then
\[
3^{r-2}\le\frac N9\le\frac{4m+1}{9}<m,
\qquad 3^4\le N.
\]
Consequently $\Lambda(m)$ contains $3^{r-2}$ and $\Lambda(q)$ contains $3^2$. This proves~(4.4).

By Theorem~1.7,
\[
D_n\le\Lambda(m)\Lambda(q)<3^{m+q}
\le3^{m+\sqrt{2n-1}}.
\]
Combining this with~(4.3) proves~(4.1).
\end{proof}

\section{A lower bound for the odd part of the resultant}

We continue to work under the factorization assumption $H=AB$, and write
\[
\deg A=k\le e=\deg B=m-k,\qquad
a=\operatorname{lc}(A)>0,\qquad
\gamma_n=\bigl(n(n+1)P_n(0)^2+P_n'(0)^2\bigr)^{1/2}.
\]
The distribution of the roots of the Legendre polynomials implies that all roots of $H$ are simple and lie in $(0,1)$. We first express the resultant as a product involving derivatives and root differences, and then control the root differences using the length of the interval $(0,1)$.

\begin{theorem}[Lower bound for the odd part of the resultant]
Under the above nontrivial factorization assumption,
\begin{equation}
\frac{\log\mathcal R}{k}>m\log4-\log\frac{4(m+1)^2}{\sqrt m}.
\label{eq:5.1}
\end{equation}
\end{theorem}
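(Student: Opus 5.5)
The plan is to follow the archimedean route outlined in Section~1: write $|R|$ as a quotient of derivative values and root differences, bound the numerator from below and the denominator from above, and then remove the power of $2$. The $2$-adic step is the one I have not been able to close.

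\emph{Step 1: the root-product formula.} At each root $t_i$ of $A$ we have $H'(t_i)=A'(t_i)B(t_i)$. Moreover,
\[
\prod_i A'(t_i)=\pm a^k\prod_{i<j}(t_i-t_j)^2,
\qquad R=a^e\prod_i B(t_i)\quad\text{by (2.5)}.
\]
Multiplying these gives
\[
|R|=\frac{a^{e-k}\prod_{i=1}^k|H'(t_i)|}{\prod_{i<j}(t_i-t_j)^2}.
\]
Since $a\ge1$ and $e\ge k$, we may drop $a^{e-k}\ge1$ if needed. I would rather keep it, though: $ab=\operatorname{lc}(M_n)=\kappa_n\binom{2n}{n}2^{-n}$ is an odd integer of size roughly $2^n/\sqrt{\pi n}$, which may be useful later.

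\emph{Step 2: lower bound for $|H'(t_i)|$.} Write $t_i=x_i^2$ with $x_i\in(0,1)$ a positive root of $P_n$. From $M_n(x)=H(x^2)$ we get $M_n'(x)=2xH'(x^2)$, and since $P_n(x_i)=0$,
\[
|H'(t_i)|=\frac{\kappa_n|P_n'(x_i)|}{2x_i^{1+\varepsilon}}\ge\frac{\kappa_n}{2}|P_n'(x_i)|.
\]
Put $E(x)=(1-x^2)P_n'(x)^2+n(n+1)P_n(x)^2$. By (1.2), $E'(x)=2xP_n'(x)^2\ge0$ on $[0,1]$. Hence
\[
P_n'(x_i)^2\ge(1-x_i^2)P_n'(x_i)^2=E(x_i)\ge E(0)=\gamma_n^2.
\]
For even $n$, use $P_{2m}(0)^2=\binom{2m}{m}^2/16^m\ge1/(4m)$. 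For odd $n$, use $P_n'(0)=nP_{n-1}(0)$, which follows from (1.4). Either way $\gamma_n^2>4m$, so $|H'(t_i)|>\kappa_n\sqrt m$.

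\emph{Step 3: upper bound for the discriminant-type product.} Replace the Vandermonde columns $t^j$ by the shifted Chebyshev polynomials $T_j(2t-1)$. These have leading coefficient $2^{2j-1}$ for $j\ge1$ and are bounded by $1$ on $[0,1]$. The column operations divide the determinant by $\prod_{j=1}^{k-1}2^{2j-1}=2^{(k-1)^2}$. Hadamard's inequality (Theorem~1.6), applied with each row of entries bounded by $1$ so that each row has norm at most $\sqrt k$, then gives
\[
\prod_{i<j}(t_i-t_j)^2\le\frac{k^k}{2^{2(k-1)^2}}.
\]
Combining the three steps,
\[
\log|R|\ge k\log\kappa_n+\frac k2\log m+2(k-1)^2\log2-k\log k+(e-k)\log a.
\]

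\emph{Step 4: the $2$-adic part.} It remains to subtract $v_2(R)\log2$, using $v_2(\kappa_n)=n-s_2(n)$. I would compute $v_2(R)$ exactly from Wahab's polygon (Theorem~1.9) and Lemma~2.2. Since $a$ and $b$ are odd, (2.6) gives
\[
2v_2(R)=\sum_{\alpha\in Z_F}\sum_{\gamma\in Z_G}v_2(\alpha-\gamma).
\]
By (2.3), a root in block $q_i$ and a root in block $q_j$ satisfy $v_2(\alpha-\gamma)=1-1/\min(q_i,q_j)$. With $S,T$ the disjoint sets of binary places chosen by $F$ and $G$, this yields
\[
2v_2(R)=\sum_{i\in S,\,j\in T}\bigl(q_iq_j-\max(q_i,q_j)\bigr)=4ke-\sum_{i\in S,\,j\in T}\max(q_i,q_j).
\]
The same $2$-adic bookkeeping should be applied to $\operatorname{lc}(M_n)$ to exploit $a$. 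The remaining task is to show
\[
k(n-s_2(n))+2(k-1)^2-v_2(R)+\frac{e-k}{\log2}\log a-k\log_2 k\ \ge\ 2km-k\log_2\frac{4(m+1)^2}{m},
\]
using $s_2(n)\le\log_2(n)+1$ and $k\le m/2$. This produces the $\log\frac{4(m+1)^2}{\sqrt m}$ slack in the statement.

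I expect this last reconciliation to be the main obstacle. The crude bound $v_2(R)<2ke$ visibly leaves only about $4k^2\log 2$ rather than $2km\log2$, so it is not enough. If the exact formula still falls short, I would instead work with the integer identity
\[
\operatorname{Res}(A,H')=\pm\operatorname{Res}(A,A')\operatorname{Res}(A,B)
\]
and bound $v_2$ of each factor separately. The point is that the $2$-adic part of $\operatorname{disc}(A)$ can be large, and this can compensate for the $2$-power absorbed by $\kappa_n$.
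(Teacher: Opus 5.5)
Your Steps~1 and~3 are exactly the paper's Lemmas~5.3 and~5.5, and your exact formula for $2v_2(R)$ is correct; it is sharper than the paper's $v_2(R)\le e(2k-s_2(k))$.

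\textbf{The gap is in Step~4.} It cannot be closed by more $2$-adic bookkeeping. Since $a$ and $b$ are odd, $v_2(\operatorname{lc}M_n)$ says nothing about $a$, and knowing the product $ab$ gives no lower bound for $a$ alone. The missing ingredient is an \emph{archimedean} lower bound for $a$, which the paper gets from $A(1)$:
\begin{itemize}
\item Since $A(1)B(1)=H(1)=M_n(1)=\kappa_n$ and all roots lie in $(0,1)$, $A(1)$ is a positive power of $2$.
\item By (2.3), the $2u$ roots of $F(x)=A(x^2)$ attached to each binary place value $u$ of $k$ satisfy $v_2(1-x)=1-1/(2u)$. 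Since $a$ is odd, $v_2(A(1))=\sum_u(2u-1)=2k-s_2(k)=:d$.
\item On the other hand, $A(1)=a\prod_i(1-t_i)<a$. Hence $a>2^d$.
\end{itemize}
Together with $v_2(R)\le ed$ this gives $a^{e-k}2^{-v_2(R)}\ge 2^{d(e-k)-de}=2^{-dk}$. Also $\kappa_n2^{-d}=2^{2e-s_2(e)}$, because disjoint place values (Lemma~2.2) give $s_2(m)=s_2(k)+s_2(e)$. This yields
\[
\mathcal R>V_A^{-1}\bigl(2^{2e-s_2(e)}\gamma_n\bigr)^k,
\]
and $2^{s_2(e)}\le e+1$ together with $16k(e+1)\le4(m+1)^2$ finishes. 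The shortfall you observed, roughly $2k(e-k)$ bits, is precisely what $a^{e-k}>2^{d(e-k)}$ supplies. Your fallback identity $\operatorname{Res}(A,H')=\pm\operatorname{Res}(A,A')\operatorname{Res}(A,B)$ is just Step~1 again and cannot provide it.

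Your displayed target also has the $\sqrt m$ on the wrong side. After moving $\frac k2\log_2 m$ across, the right-hand side should be $2km-k\log_2\bigl(4(m+1)^2\bigr)$. The version with $4(m+1)^2/m$ is too strong by $k\log_2 m$ and is not reachable by this method.

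\textbf{Step~2 contains a false claim.} You bound $x_i^{-1-\varepsilon}\ge1$ and $1-x_i^2\le1$ separately, which loses a factor $2$, and then compensate with $\gamma_n^2>4m$. That inequality fails; for example $\gamma_2^2=3/2$, $\gamma_3^2=9/4$, and $\gamma_{2m}^2=2m(2m+1)b_m^2\sim4m/\pi$. Your own estimate $P_{2m}(0)^2\ge1/(4m)$ only gives $\gamma_{2m}^2\ge m+\tfrac12$. The fix is to keep the factor: $|P_n'(x_i)|\ge\gamma_n/\sqrt{1-t_i}$, so
\[
|H'(t_i)|\ge\frac{\kappa_n\gamma_n}{2t_i^{(1+\varepsilon)/2}\sqrt{1-t_i}}\ge\kappa_n\gamma_n,
\]
using $2\sqrt{t_i(1-t_i)}\le1$. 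Then $\gamma_n^2>m$, which is what $b_m\ge1/(2\sqrt m)$ actually delivers, is enough.
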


To prove this theorem, we first establish the valuation estimate needed to remove the power of $2$, then derive a product relation between the resultant and the derivatives at the roots, and finally estimate the derivatives and the root differences within one factor separately.

\begin{proposition}
Under the factorization~(2.4),
\begin{align}
s_2(m)&=s_2(k)+s_2(e),&\qquad \kappa_n&=2^{2m-s_2(m)},\label{eq:5.2}\\
A(1)&=2^{2k-s_2(k)},& a&>A(1),\label{eq:5.3}\\
v_2(R)&\le e\bigl(2k-s_2(k)\bigr).&&\label{eq:5.4}
\end{align}
\end{proposition}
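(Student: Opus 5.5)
The plan is to derive all three assertions from the $2$-adic structure recorded in Lemma~2.2 and Theorem~1.9, together with the fact that $a$ and $b$ are odd.

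\emph{Step 1: the identities (5.2).} By Theorem~1.3, $v_2(n!)=n-s_2(n)$. If $n=2m$, then $s_2(n)=s_2(m)$. If $n=2m+1$, then $s_2(n)=s_2(m)+1$. In both cases $n-s_2(n)=2m-s_2(m)$, so $\kappa_n=2^{2m-s_2(m)}$. For the digit sums, apply Lemma~2.2 to $M_n=FG$. The degree $2k$ of $F$ is a sum of a set $S_F$ of binary place values $q_i\ge2$ of $n$, and $2e$ is a sum over a disjoint set $S_G$. Since $2k+2e=2m$ is the sum of all $q_i\ge2$, the sets $S_F$ and $S_G$ exhaust these place values. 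Hence $2k$ and $2e$ have disjoint binary supports and add without carries, which gives $s_2(m)=s_2(k)+s_2(e)$.

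\emph{Step 2: $A(1)$ and the inequality $a>A(1)$.} We have $A(1)=F(1)=a\prod_{F(\alpha)=0}(1-\alpha)$. This is positive, since $a>0$ and all roots lie in $(-1,1)$. By the proof of Lemma~2.2, a root $\alpha$ coming from the edge with place value $q_i$ satisfies $v_2(1-\alpha)=1-1/q_i$ by (2.3), and $F$ contains exactly the $q_i$ roots of each edge in $S_F$. Since $v_2(a)=0$, we get
\[
v_2(F(1))=\sum_{q_i\in S_F}q_i(1-1/q_i)=2k-|S_F|=2k-s_2(k).
\]
Next, $F(1)G(1)=M_n(1)=\kappa_n$ is a power of $2$ and both factors are positive integers. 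Therefore $F(1)$ is itself a power of $2$, and so $A(1)=2^{2k-s_2(k)}$. Finally, $A(1)=a\prod_i(1-t_i)$ with each $t_i\in(0,1)$, so $A(1)<a$.

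\emph{Step 3: the bound (5.4).} Write $R=(-1)^{ke}b^k\prod_{j=1}^eA(u_j)$. Since $b$ is odd, $v_2(R)=\sum_jv_2(A(u_j))$, and $v_2(A(u_j))=\sum_iv_2(u_j-t_i)$ because $v_2(a)=0$. In $\overline{\mathbb Q}_2$, take $t_i=\alpha^2$ with $\alpha$ a root of $F$ on the edge $q$. Then (2.3) and the computation in Lemma~2.2 give $v_2(1-\alpha)=v_2(1+\alpha)=1-1/q$, so $v_2(1-t_i)=2-2/q$. The same holds for each $u_j$ with its own place value $q'\in S_G$. Because $S_F$ and $S_G$ are disjoint, $q\neq q'$, so $1-t_i$ and $1-u_j$ have different valuations. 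Consequently
\[
v_2(u_j-t_i)=\min\{v_2(1-t_i),v_2(1-u_j)\}\le v_2(1-t_i).
\]
Summing over $i$ gives $v_2(A(u_j))\le\sum_iv_2(1-t_i)=v_2(A(1))=2k-s_2(k)$. Summing over the $e$ roots $u_j$ then yields (5.4).

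The main obstacle is Step~3, specifically transferring the valuations (2.3) from the variable $x$ to $t=x^2$. This requires $q\ge2$, so that $v_2(1+\alpha)=v_2(1-\alpha)$. The disjointness of the place values is what makes the valuations of $1-t_i$ and $1-u_j$ differ, and it is the crux that forces the ultrametric minimum.
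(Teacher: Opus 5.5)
Your proposal is correct and follows the paper's proof essentially step for step: the carry-free splitting of the binary place values from Lemma~2.2, the computation of $v_2(F(1))$ from (2.3) together with $F(1)G(1)=\kappa_n$, and the ultrametric bound $v_2(t_i-u_j)\le v_2(1-t_i)$ obtained from the distinct values $2-2/q$. The differences are only cosmetic. You index by the place values $q$ of $n$ rather than $u=q/2$ of $m$, and you write $R=(-1)^{ke}b^k\prod_j A(u_j)$ instead of the symmetric double product (2.5).
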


\begin{proof}
Deleting the units bit in the binary expansion of $n$ and dividing the remaining place values by $2$ yields the binary place values of $m$. Lemma~2.2 shows that these are partitioned into two groups whose sums are $k$ and $e$, respectively, with no place value shared by the two groups. Thus their addition involves no carries, and $s_2(m)=s_2(k)+s_2(e)$. Moreover, $s_2(n)=s_2(m)+\varepsilon$, so $n-s_2(n)=2m-s_2(m)$.

Since all the real roots lie in $(0,1)$, $A(1)$ and $B(1)$ are positive integers with product $\kappa_n$, and are therefore both powers of $2$. To compute the $2$-adic valuations, fix an extension of the $2$-adic valuation to the splitting field of these roots; all valuations of $t_i$ and $u_j$ below are taken with respect to this extension. Let $S$ be the set of binary place values whose sum is $k$. For each $u\in S$, the corresponding $2u$ roots of $F$ satisfy $v_2(1-x)=1-1/(2u)$ by~(2.3). The leading coefficient of $F$ is odd, so the root product formula gives
\[
v_2(A(1))=v_2(F(1))
=\sum_{u\in S}2u\left(1-\frac1{2u}\right)
=\sum_{u\in S}(2u-1)
=2k-s_2(k).
\]
This determines $A(1)$. At the same time,
\[
A(1)=a\prod_{i=1}^{k}(1-t_i)<a,
\]
which proves~(5.3).

Finally, we estimate the power of $2$ in the resultant. A root $t=x^2$ in the squared variable corresponding to the place value $u$ satisfies
\[
v_2(1-t)=v_2(1-x)+v_2(1+x)=2-\frac1u.
\]
Roots belonging to different factors correspond to different place values, so $v_2(1-t_i)\ne v_2(1-u_j)$. Since the valuation of the difference of two elements of unequal valuation is the smaller of their valuations,
\[
v_2(t_i-u_j)
=\min\{v_2(1-t_i),v_2(1-u_j)\}
\le v_2(1-t_i).
\]
Summing over all pairs of roots, and using the fact that $a$ and $b$ are odd together with~(2.5), we obtain
\[
v_2(R)\le e\sum_{i=1}^{k}v_2(1-t_i)
=e\,v_2(A(1))
=e(2k-s_2(k)).
\]
\end{proof}

\begin{lemma}[The resultant and derivatives at the roots]
Let $t_1,\ldots,t_k$ be all the roots of $A$, and put
\[
V_A=\prod_{1\le i<j\le k}(t_i-t_j)^2.
\]
For $k=1$, set $V_A=1$. Then
\begin{equation}
|R|=\frac{a^{e-k}}{V_A}\prod_{i=1}^{k}|H'(t_i)|.
\label{eq:5.5}
\end{equation}
\end{lemma}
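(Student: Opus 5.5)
The plan is a direct computation from the root-product formula (2.5), $R=a^e\prod_{i=1}^k B(t_i)$, together with the product rule applied to $H=AB$. Since $A(t_i)=0$, differentiating gives $H'(t_i)=A'(t_i)B(t_i)$. Each $H'(t_i)$ is nonzero because $H$ has simple roots, so $A'(t_i)\neq0$ and we may write
\[
B(t_i)=\frac{H'(t_i)}{A'(t_i)}.
\]
Substituting this into (2.5) reduces the lemma to computing $\prod_{i=1}^k A'(t_i)$.

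For that product, factor $A(t)=a\prod_{j}(t-t_j)$. Then $A'(t_i)=a\prod_{j\ne i}(t_i-t_j)$, and taking the product over $i$ gives
\[
\prod_{i=1}^k A'(t_i)=a^k\prod_{i\ne j}(t_i-t_j)=a^k(-1)^{k(k-1)/2}V_A.
\]
This works because each unordered pair $\{i,j\}$ contributes $(t_i-t_j)(t_j-t_i)=-(t_i-t_j)^2$. When $k=1$ the product over pairs is empty, so $A'(t_1)=a$, which is consistent with the convention $V_A=1$.

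Combining the two steps gives
\[
R=\pm\frac{a^{e}}{a^kV_A}\prod_{i=1}^k H'(t_i).
\]
Taking absolute values then yields (5.5). Here we use that $a>0$, and that $V_A>0$ because the $t_i$ are distinct real numbers in $(0,1)$, as noted in Section~2.3.

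There is no genuine obstacle in this argument. The only points needing care are the sign bookkeeping in $\prod_{i\ne j}(t_i-t_j)$, which the absolute values make harmless, and checking that $A'(t_i)\ne0$ so the division is legitimate. The latter follows from the simplicity of the roots of $H$ (Theorem~1.5 transported to $t=x^2$).
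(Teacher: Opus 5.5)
Your proposal is correct and is essentially the paper's proof: both combine $H'(t_i)=A'(t_i)B(t_i)$, the identity $\prod_i|A'(t_i)|=a^kV_A$ and the root-product formula $|R|=a^e\prod_i|B(t_i)|$. The only cosmetic difference is that you divide by $A'(t_i)$, which is justified by simplicity of the roots, and track the sign $(-1)^{k(k-1)/2}$. The paper instead multiplies the identities and works with absolute values throughout.
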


\begin{proof}
Since $H=AB$ and $A(t_i)=0$, we have $H'(t_i)=A'(t_i)B(t_i)$. On the other hand, $A(t)=a\prod_{i=1}^{k}(t-t_i)$ gives
\[
\prod_{i=1}^{k}|A'(t_i)|
=a^k\prod_{i=1}^{k}\prod_{j\ne i}|t_i-t_j|
=a^kV_A.
\]
The root product formula for the resultant gives
\[
|R|=a^e\prod_{i=1}^{k}|B(t_i)|.
\]
Substituting these two identities into the product of derivatives yields $\prod_i|H'(t_i)|=a^{k-e}V_A|R|$, and hence~(5.5).
\end{proof}

\begin{lemma}[A lower bound for derivatives at the roots]
If $t$ is a root of $H$, then
\[
|H'(t)|>\kappa_n\gamma_n,\qquad \gamma_n>\sqrt m.
\]
Consequently,
\[
|R|>\frac{a^{e-k}}{V_A}(\kappa_n\gamma_n)^k.
\]
\end{lemma}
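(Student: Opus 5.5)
The plan is to relate $H'$ at a root of $H$ to $P_n'$ at a positive root of $P_n$, and then bound $|P_n'|$ there from below by an energy function built from the Legendre differential equation.

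\textbf{Translating to $P_n$.} Let $t$ be a root of $H$. By Theorem~1.5, $t=\alpha^2$ for a root $\alpha\in(0,1)$ of $P_n$. Differentiating $H(x^2)=M_n(x)=\kappa_nP_n(x)/x^\varepsilon$ gives $2\alpha H'(\alpha^2)=M_n'(\alpha)$. Since $P_n(\alpha)=0$, the quotient-rule term vanishes, so $M_n'(\alpha)=\kappa_nP_n'(\alpha)/\alpha^\varepsilon$. Hence
\[
|H'(t)|=\frac{\kappa_n|P_n'(\alpha)|}{2\alpha^{1+\varepsilon}}\ge\frac{\kappa_n|P_n'(\alpha)|}{2\alpha},
\]
because $0<\alpha<1$. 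It therefore suffices to prove $|P_n'(\alpha)|>2\alpha\gamma_n$.

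\textbf{The energy function.} Put $E(x)=(1-x^2)P_n'(x)^2+n(n+1)P_n(x)^2$. By (1.2), $E'(x)=2xP_n'(x)^2$. This is $\ge0$ on $[0,\alpha]$ and vanishes only at finitely many points, since $P_n'\not\equiv0$. So $E$ is strictly increasing there, and
\[
(1-\alpha^2)P_n'(\alpha)^2=E(\alpha)>E(0)=\gamma_n^2 .
\]
Thus $|P_n'(\alpha)|>\gamma_n/\sqrt{1-\alpha^2}$. Since $2\alpha\sqrt{1-\alpha^2}\le\alpha^2+(1-\alpha^2)=1$, this gives $|P_n'(\alpha)|>\gamma_n\ge 2\alpha\gamma_n\cdot\bigl(2\alpha\sqrt{1-\alpha^2}\bigr)^{-1}\cdot\ldots$; more directly, $\gamma_n/\sqrt{1-\alpha^2}\ge2\alpha\gamma_n$. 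Hence $|H'(t)|>\kappa_n\gamma_n$.

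\textbf{Showing $\gamma_n>\sqrt m$.} Split by parity, using (2.2) at $x=0$.
\begin{itemize}
\item If $n=2m$, then $P_n'(0)=0$ and $|P_n(0)|=\binom{2m}{m}4^{-m}$.
\item If $n=2m+1$, then $P_n(0)=0$, and (1.4) at $x=0$ gives $P_n'(0)=nP_{n-1}(0)$ with $n-1=2m$.
\end{itemize}
In both cases we use the elementary bound $\binom{2m}{m}4^{-m}\ge1/(2\sqrt m)$, proved by induction on the ratio of consecutive terms. For even $n$,
\[
\gamma_n^2\ge\frac{2m(2m+1)}{4m}=m+\tfrac12>m .
\]
For odd $n$,
\[
\gamma_n^2\ge\frac{(2m+1)^2}{4m}>m .
\]

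\textbf{Conclusion.} Substituting $|H'(t_i)|>\kappa_n\gamma_n$ for each of the $k$ roots of $A$ into (5.5) gives the stated lower bound for $|R|$. The only delicate point is strictness of $E(\alpha)>E(0)$, which needs the positivity of $E'$ away from finitely many points. Otherwise, the main care is in tracking the $\alpha^\varepsilon$ factor in the odd case, where it only helps.
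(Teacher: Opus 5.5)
Your proof is correct and follows the paper's argument essentially step for step. The steps coincide: the function $E_n$ with $E_n'(x)=2xP_n'(x)^2$ giving $(1-\alpha^2)P_n'(\alpha)^2>\gamma_n^2$, the relation $2\alpha^{1+\varepsilon}H'(\alpha^2)=\kappa_nP_n'(\alpha)$ together with $2\alpha\sqrt{1-\alpha^2}\le1$, and the bound $4^{-m}\binom{2m}{m}\ge1/(2\sqrt m)$. Computing $P_{2m+1}'(0)$ via (1.4) instead of (2.2) changes nothing.

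Do delete the abandoned chain before ``more directly'': it asserts $\gamma_n\ge\gamma_n/\sqrt{1-\alpha^2}$, which is false for $0<\alpha<1$. The inequality $\gamma_n/\sqrt{1-\alpha^2}\ge2\alpha\gamma_n$ that you state next is the correct one.
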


\begin{proof}
Define
\[
E_n(x)=(1-x^2)P_n'(x)^2+n(n+1)P_n(x)^2.
\]
Differentiation and the differential equation~(1.2) give
\begin{equation}
\begin{aligned}
E_n'(x)
&=-2xP_n'(x)^2+2P_n'(x)\bigl((1-x^2)P_n''(x)+n(n+1)P_n(x)\bigr)\\
&=2xP_n'(x)^2.
\end{aligned}
\label{eq:5.6}
\end{equation}
Since $P_n'$ is a nonzero polynomial, $E_n(x)>E_n(0)=\gamma_n^2$ for every $x>0$. Let $t\in(0,1)$ be a root of $H$ and take $x=\sqrt t$. The identity $\kappa_nP_n(x)=x^\varepsilon H(x^2)$ gives $P_n(x)=0$, hence
\[
|P_n'(\sqrt t)|>\frac{\gamma_n}{\sqrt{1-t}}.
\]
Differentiating the normalization identity at the root gives
$\kappa_nP_n'(\sqrt t)=2t^{(\varepsilon+1)/2}H'(t)$. Therefore
\begin{equation}
|H'(t)|>
\frac{\kappa_n\gamma_n}{2t^{(\varepsilon+1)/2}\sqrt{1-t}}
\ge\kappa_n\gamma_n,
\label{eq:5.7}
\end{equation}
where we used
$2t^{(\varepsilon+1)/2}\sqrt{1-t}\le2\sqrt{t(1-t)}\le1$.
The claimed bound for $|R|$ now follows from Lemma~5.3.

To estimate $\gamma_n$, put $b_m=4^{-m}\binom{2m}{m}$. The coefficient formula~(2.2) gives
\[
\gamma_{2m}^2=2m(2m+1)b_m^2,\qquad
\gamma_{2m+1}^2=(2m+1)^2b_m^2.
\]
We have $b_1=1/2$ and
\[
\frac{b_{m+1}}{b_m}=\frac{2m+1}{2m+2},\qquad
\left(\frac{2m+1}{2m+2}\right)^2-\frac m{m+1}
=\frac1{4(m+1)^2}>0.
\]
Induction yields $b_m\ge1/(2\sqrt m)$. Thus
\[
\gamma_{2m}^2\ge m+\frac12>m,\qquad
\gamma_{2m+1}^2\ge\frac{(2m+1)^2}{4m}>m,
\]
proving the remaining assertion.
\end{proof}

\begin{lemma}[The product of root differences in an interval]
If $t_1,\ldots,t_k\in[0,1]$, then
\begin{equation}
\prod_{1\le i<j\le k}(t_i-t_j)^2
\le\frac{k^k}{2^{2(k-1)^2}}.
\label{eq:5.8}
\end{equation}
\end{lemma}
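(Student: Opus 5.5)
We need to prove the bound $\prod_{i<j}(t_i-t_j)^2\le k^k/2^{2(k-1)^2}$ for points in $[0,1]$. The plan is to write the product as a squared Vandermonde determinant and bound that determinant with Hadamard's inequality (Theorem~1.6), after first changing the basis from monomials to Chebyshev polynomials. If any two $t_i$ coincide the left side is $0$, so we may assume they are distinct; the case $k=1$ is trivial.

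First, map $[0,1]$ to $[-1,1]$ via $s_i=2t_i-1$. Then $\prod_{i<j}(t_i-t_j)^2=2^{-k(k-1)}\prod_{i<j}(s_i-s_j)^2$. Next, $\prod_{i<j}(s_i-s_j)$ is, up to sign, the Vandermonde determinant $\det(s_i^{\,j})_{0\le j\le k-1}$. Let $T_j$ be the Chebyshev polynomials, with leading coefficient $2^{j-1}$ for $j\ge1$ and $T_0=1$. Column operations give
\[
\det\bigl(T_j(s_i)\bigr)=\prod_{j=1}^{k-1}2^{j-1}\cdot\det\bigl(s_i^{\,j}\bigr)=2^{(k-1)(k-2)/2}\det\bigl(s_i^{\,j}\bigr).
\]

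Now apply Hadamard's inequality to the matrix $(T_j(s_i))$, taking its rows as the vectors (equivalently, the columns of the transpose). Since $|T_j(s)|\le1$ on $[-1,1]$, each row has Euclidean norm at most $\sqrt k$. Hence $|\det(T_j(s_i))|\le k^{k/2}$, so
\[
\prod_{i<j}(s_i-s_j)^2\le k^k\,2^{-(k-1)(k-2)}.
\]

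Combining the two steps gives
\[
\prod_{i<j}(t_i-t_j)^2\le k^k\,2^{-k(k-1)-(k-1)(k-2)}=k^k\,2^{-2(k-1)^2},
\]
because $k(k-1)+(k-1)(k-2)=(k-1)(2k-2)=2(k-1)^2$. This is exactly (5.8).

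The only delicate point is the exponent bookkeeping: the leading-coefficient product starts at $j=1$, and the shift contributes the factor $2^{-k(k-1)}$. Getting both right is what yields the exponent $2(k-1)^2$ on the nose. No genuine obstacle remains, since the Chebyshev bound $|T_j|\le1$ on $[-1,1]$ is standard, following from $T_j(\cos\theta)=\cos j\theta$.
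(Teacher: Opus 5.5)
Your proof is correct and is essentially the paper's argument: both apply Hadamard's inequality to the matrix $\bigl(T_{j-1}(2t_i-1)\bigr)_{i,j}$, whose entries are bounded by $1$, and then pass to the Vandermonde product via leading coefficients. The differences are cosmetic. You substitute $s_i=2t_i-1$ first, so the factor $2^{(k-1)^2}$ appears as $2^{k(k-1)/2}\cdot 2^{(k-1)(k-2)/2}$, and you take row norms; the paper reads off the leading coefficient $2^{2j-1}$ of $T_j(2t-1)$ directly and takes column norms.
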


\begin{proof}
Let $T_j$ be the Chebyshev polynomial of the first kind, defined by $T_j(\cos\theta)=\cos(j\theta)$. Then $|T_j(2t-1)|\le1$ for $0\le t\le1$. From $T_0=1$, $T_1=x$, and the recurrence $T_{j+1}=2xT_j-T_{j-1}$, induction shows that the leading coefficient of $T_j$ is $2^{j-1}$ for $j\ge1$. Thus the leading coefficient of $T_j(2t-1)$ is $2^{2j-1}$.

Consider the matrix $D=(T_{j-1}(2t_i-1))_{1\le i,j\le k}$. The Euclidean norm of each column is at most $\sqrt k$, so Theorem~1.6 gives
\[
|\det D|^2\le k^k.
\]
Using the leading coefficients above and the Vandermonde determinant formula, we find
\[
\det D
=\left(\prod_{j=1}^{k-1}2^{2j-1}\right)
\prod_{1\le i<j\le k}(t_j-t_i)
=2^{(k-1)^2}\prod_{1\le i<j\le k}(t_j-t_i).
\]
Taking the squared absolute value proves the desired inequality; for $k=1$, both sides are $1$.
\end{proof}

\begin{proof}[\textbf{Proof of Theorem 5.1}]
Apply Proposition~5.2 and set $d=2k-s_2(k)$, so that
\[
A(1)=2^d,\qquad a>A(1),\qquad v_2(R)\le ed.
\]
Since $e\ge k$, Lemma~5.4 gives
\begin{equation}
\begin{aligned}
\mathcal R
&>\frac{a^{e-k}}{2^{ed}V_A}(\kappa_n\gamma_n)^k\\
&\ge\frac1{V_A}\left(\frac{\kappa_n\gamma_n}{2^d}\right)^k
=\frac1{V_A}\left(2^{2e-s_2(e)}\gamma_n\right)^k.
\end{aligned}
\label{eq:5.9}
\end{equation}
The last equality uses $s_2(m)=s_2(k)+s_2(e)$ and
$\kappa_n/2^d=2^{2e-s_2(e)}$.
Applying Lemma~5.5, taking logarithms, and dividing by $k$, we obtain
\begin{equation}
\begin{aligned}
\frac{\log\mathcal R}{k}
&>(2e-s_2(e))\log2+\log\gamma_n
  +\frac{2(k-1)^2}{k}\log2-\log k\\
&=m\log4-\log\bigl(2^{s_2(e)+4}k\bigr)
  +\log\gamma_n+\frac{2\log2}{k}.
\end{aligned}
\label{eq:5.10}
\end{equation}
An integer with $s$ nonzero binary digits is at least $2^s-1$, so
$2^{s_2(e)}\le e+1$. Since $k+(e+1)=m+1$,
\[
2^{s_2(e)+4}k\le16k(e+1)\le4(m+1)^2.
\]
Substituting this and $\gamma_n>\sqrt m$ from Lemma~5.4 into~(5.10), and discarding the positive term $2\log2/k$, proves~(5.1).
\end{proof}

\section{Comparison of the two bounds and proof of the main theorem}

The preceding two sections establish upper and lower bounds for the odd part of the same resultant. We first show that these bounds are incompatible for every $n\ge256$, then use Theorem~1.8 and explicit integer comparisons to handle the remaining degrees. We continue to write $n=2m+\varepsilon$, $m=\lfloor n/2\rfloor$, and $M_n=\kappa_nP_n/x^\varepsilon$. Under the assumed factorization, $k=\deg A\le e=\deg B=m-k$ and $\mathcal R=|\operatorname{Res}_t(A,B)|/2^{v_2(R)}$.

\begin{theorem}[Irreducibility in sufficiently large degrees]
For every integer $n\ge256$, the polynomial $M_n$ is irreducible in $\mathbb Q[x]$.
\end{theorem}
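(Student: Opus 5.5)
Suppose, for contradiction, that $M_n$ is reducible for some $n\ge256$. Lemma~2.2 and the construction of Section~2 then give the factorization~(2.4), with $1\le k\le m/2$ and $m=\lfloor n/2\rfloor\ge128$. Since $n\ge64$, both Theorem~4.1 and Theorem~5.1 apply to the same odd part $\mathcal R$. Chaining them gives
\[
m\log4-\log\frac{4(m+1)^2}{\sqrt m}<\frac{\log\mathcal R}{k}<\bigl(m+\sqrt{2n-1}\bigr)\log3 .
\]
The task is therefore to show that this chain is impossible for all $n\ge256$. Because $2n-1\le4m+1$, it suffices to prove the purely real inequality
\[
m\log\frac43>\sqrt{4m+1}\,\log3+\log\frac{4(m+1)^2}{\sqrt m}\qquad(m\ge128),
\]
as announced in the outline. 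This single inequality handles both parities of $n$ at once.

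To prove it, I would set $\phi(m)=m\log(4/3)-\sqrt{4m+1}\log3-\log 4-2\log(m+1)+\tfrac12\log m$ and treat $m$ as a real variable on $[128,\infty)$. The first step is a direct numerical check at $m=128$. There $m\log(4/3)\approx36.82$ and $\sqrt{513}\log3\approx24.88$. Also $\log4+2\log129-\tfrac12\log128\approx1.386+9.720-2.426\approx8.68$. The margin is therefore about $3.26>0$.

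The second step is to show that $\phi$ is increasing. We have
\[
\phi'(m)=\log\frac43-\frac{2\log3}{\sqrt{4m+1}}-\frac{2}{m+1}+\frac1{2m}.
\]
For $m\ge128$, the middle term is at most $2.2/22.6<0.098$ and $2/(m+1)<0.016$. Since $\log(4/3)\approx0.2877$, this gives $\phi'(m)>0$. Hence $\phi(m)\ge\phi(128)>0$ for all $m\ge128$.

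This strict inequality contradicts the chain above, so no factorization~(2.4) exists, and $M_n$ is irreducible for every $n\ge256$.

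The main obstacle is only bookkeeping. I need to confirm that every hypothesis of Theorems~4.1 and~5.1 holds, in particular $n\ge64$ and the parity conventions for $m$. I also need to check that replacing $\sqrt{2n-1}$ by the larger $\sqrt{4m+1}$ goes in the safe direction, which it does, since it enlarges the upper bound being beaten. The numerical constants at $m=128$ leave a comfortable margin, so rounding should cause no difficulty.
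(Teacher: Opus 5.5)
Your proposal is correct and follows the paper's proof essentially step for step. Like the paper, you chain (4.1) and (5.1), use $2n-1\le 4m+1$ to reduce to (6.1), check the auxiliary function at $m=128$, and show its derivative is positive on $[128,\infty)$. The only difference is that the paper certifies the value at $128$ with rational bounds ($\log(4/3)>2/7$, $\log3<11/10$, $\log2<7/10$, $\sqrt{513}<23$, $4\cdot129^2/\sqrt{128}<2^{13}$, giving $f(128)>76/35$) rather than decimal approximations; your margin of about $3.26$ easily accommodates this.
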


\begin{proof}
Suppose that $n\ge256$ and, to the contrary, that $M_n$ is reducible. Then $m\ge128$. By~(4.1), (5.1), and $2n-1\le4m+1$, it is enough to prove
\begin{equation}
m\log(4/3)>
\sqrt{4m+1}\log3+\log\frac{4(m+1)^2}{\sqrt m}.
\label{eq:6.1}
\end{equation}
For real $x\ge128$, define
\[
f(x)=x\log(4/3)-\sqrt{4x+1}\log3-\log4-2\log(x+1)+\tfrac12\log x.
\]
The elementary inequalities
\[
\log(4/3)>\frac27,\qquad \log3<\frac{11}{10},\qquad \log2<\frac7{10}
\]
follow, respectively, from
$\log((1+u)/(1-u))>2u$ at $u=1/7$, and from the positive terms of the exponential series. Since
$\sqrt{513}<23$ and $4\cdot129^2/\sqrt{128}<2^{13}$,
\[
f(128)>
\frac{256}{7}-23\cdot\frac{11}{10}-13\cdot\frac7{10}
=\frac{76}{35}>0.
\]
Moreover, for every $x\ge128$,
\begin{equation}
\begin{aligned}
f'(x)
&=\log(4/3)-\frac{2\log3}{\sqrt{4x+1}}-\frac2{x+1}+\frac1{2x}\\
&>\frac27-\frac1{10}-\frac2{129}>0,
\end{aligned}
\label{eq:6.2}
\end{equation}
where $\sqrt{4x+1}>22$ was used. Hence $f(m)>0$, proving~(6.1). The lower bound~(5.1) therefore exceeds the upper bound~(4.1), a contradiction.
\end{proof}

\begin{lemma}[The remaining degrees]
For every integer $2\le n\le255$, the polynomial $M_n$ is irreducible in $\mathbb Q[x]$.
\end{lemma}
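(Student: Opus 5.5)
The plan is to split the range $2\le n\le255$ into three groups and treat each with a different earlier result.

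\emph{Small degrees.} Theorem~1.8(1) covers every $n\le121$, since $n=2j$ or $n=2j+1$ with $1\le j\le60$.

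\emph{Degrees near a prime.} For $122\le n\le255$, Theorem~1.8(2) covers every $n$ lying within the window $[p-4,p+3]$ of some odd prime $p$. I will list the primes from $113$ to $257$, namely $113,127,131,137,139,149,151,157,163,167,173,179,181,191,193,197,199,211,223,227,229,233,239,241,251,257$. Taking the union of their windows intersected with $[122,255]$ leaves a short list of uncovered $n$, expected to be the fifteen degrees mentioned in the outline. These lie in the long prime gaps, for example around $115$--$126$ (only partly above $121$), $200$--$206$, $215$--$218$, and $244$--$246$. I will record the exact list explicitly.

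\emph{The remaining exceptional degrees.} For each leftover $n$ (all with $n\ge64$, so Sections~3--5 apply), I assume a factorization with some $k$. Lemma~2.2 restricts $k$: it must be a sum of a subset of the binary place values of $m$, and $k\le m/2$. This leaves finitely many admissible $k$. For each such $k$, I compare two integer bounds.
\begin{itemize}
\item[(i)] The exact upper bound from (4.3) is $\mathcal R\le D_n^k$, with $D_n$ computed directly from (4.2). Equivalently, $\log\mathcal R/k\le\log D_n$.
\item[(ii)] The sharp lower bound from the first line of (5.10) uses the actual values of $s_2(e)$, $\gamma_n$, and the retained term $2\log2/k$. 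Concretely,
\[
\log\mathcal R/k>(2e-s_2(e))\log2+\log\gamma_n+\tfrac{2(k-1)^2}{k}\log2-\log k .
\]
\end{itemize}
Both bounds can be phrased as an integer inequality after exponentiating. Squaring removes the radical in $\gamma_n$, whose square is rational by the formula in Lemma~5.4. The contradiction then reads
\[
D_n^{2k}\,k^{2k}\,2^{2s_2(e)k}<2^{4ek+4(k-1)^2}\gamma_n^{2k}.
\]
This is a finite check for each pair $(n,k)$.

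\emph{Main obstacle.} The crude bound $3^{m+q}$ does not beat $4^m$ at these sizes, so the proof has to use the true values of $D_n$ rather than Hanson's estimate. For $m\approx100$, $\log D_n$ should be close to $\vartheta(m)$ plus small corrections, roughly $m$, which is well below $m\log4$. So I expect every check to succeed comfortably. The risky cases are small $k$, especially $k=1$ and $k=2$. There the Chebyshev/Hadamard term is wasteful, so I will bound $V_A$ directly: $V_A=1$ when $k=1$ and $V_A\le1$ when $k=2$. If some case is still tight, I will fall back on the exact $v_2(R)$ instead of (5.4).
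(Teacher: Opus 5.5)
Your proposal is correct and follows essentially the paper's route: Theorem~1.8(1) for $n\le121$; the prime windows of Theorem~1.8(2), which leave exactly the fifteen degrees $122,143,144,185,186,203$--$206,215$--$218,245,246$ (your rough list is slightly off, since $200$--$202$ and $244$ are covered while $143,144,185,186$ are not); and then $\mathcal R\mid D_n^k$ compared against the Section~5 lower bound. The only difference is that the paper avoids enumerating admissible $k$ by using the $k$-independent bound (5.1) with $4(m+1)^2/\sqrt m<2^{13}$, so one integer comparison $D_n<2^{2m-13}$ per degree suffices; your small-$k$ refinements are unnecessary, and Lemma~5.5 already gives $V_A\le1$ exactly for $k=1,2$.
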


\begin{proof}
Theorem~1.8(1) covers $2\le n\le121$. Among $122\le n\le255$, Theorem~1.8(2) covers every degree except
\begin{equation}
\begin{split}
\mathcal S=\{&122,143,144,185,186,203,204,205,206,\\
             &215,216,217,218,245,246\}.
\end{split}
\label{eq:6.3}
\end{equation}
Indeed, the primes from $127$ to $257$ are
\[
\begin{gathered}
127,131,137,139,149,151,157,163,167,173,\\
179,181,191,193,197,199,211,223,227,229,\\
233,239,241,251,257;
\end{gathered}
\]
their intervals $[p-4,p+3]$ leave exactly the degrees in~(6.3).

Fix $n\in\mathcal S$ and suppose that $M_n$ is reducible. All these degrees exceed $64$, so both~(4.3) and~(5.1) apply. For the corresponding values $61\le m\le123$, the function $4(x+1)^2/\sqrt x$ is increasing for $x\ge1$, and hence
\begin{equation}
\frac{4(m+1)^2}{\sqrt m}
\le\frac{4\cdot128^2}{\sqrt{127}}<2^{13}.
\label{eq:6.4}
\end{equation}
Direct multiplication of the prime powers in~(4.2) gives the integers $r_n$ in the following table, with
\[
D_n<2^{r_n}\le2^{2m-13}.
\]
\begin{center}
\begin{tabular}{rrrr}
\toprule
$n$ & $m$ & $r_n$ & $2m-13$\\
\midrule
122&61&97&109\\
143&71&109&129\\
144&72&109&131\\
185&92&146&171\\
186&93&146&173\\
203&101&159&189\\
204&102&159&191\\
205&102&159&191\\
206&103&166&193\\
215&107&172&201\\
216&108&172&203\\
217&108&172&203\\
218&109&179&205\\
245&122&186&231\\
246&123&186&233\\
\bottomrule
\end{tabular}
\end{center}
These comparisons involve only integers: each row asserts that the product in~(4.2) is smaller than the indicated power of $2$. By~(5.1) and~(6.4),
\[
\frac{\log\mathcal R}{k}>(2m-13)\log2>\log D_n,
\]
contrary to~(4.3). This handles every degree in $\mathcal S$ and completes the proof.
\end{proof}

\begin{proof}[\textbf{Proof of the main theorem (Theorem 1.1)}]
Theorem~6.1 covers all original degrees $n\ge256$, and Lemma~6.2 covers $2\le n\le255$. Finally, $M_n=\kappa_nP_n(x)/x^\varepsilon$ and $\kappa_n$ is a nonzero rational number, so multiplication by $\kappa_n$ does not affect irreducibility in $\mathbb Q[x]$. Taking $n=2j$ and $n=2j+1$, respectively, proves that $P_{2j}(x)$ and $P_{2j+1}(x)/x$ are irreducible in $\mathbb Q[x]$ for every $j\ge1$.
\end{proof}

\end{document}